\theoremstyle{plain}\newtheorem{Theorem}{Theorem}[section]
\theoremstyle{plain}\newtheorem{Corollary}[Theorem]{Corollary}
\theoremstyle{plain}\newtheorem{Lemma}[Theorem]{Lemma}
\theoremstyle{plain}\newtheorem{Definition}[Theorem]{Definition}
\theoremstyle{plain}\newtheorem{Proposition}[Theorem]{Proposition}
\theoremstyle{plain}
\theoremstyle{plain}\newtheorem*{Theorem*}{Theorem}
\theoremstyle{remark}\newtheorem{remark}[Theorem]{Remark}
\numberwithin{equation}{section}
\DeclareMathOperator{\rank}{rank}
\DeclareMathOperator{\II}{I}
\DeclareMathOperator{\AHI}{AHI}
\DeclareMathOperator{\Kh}{Kh}
\DeclareMathOperator{\Khr}{Khr}
\DeclareMathOperator{\lk}{lk}
\DeclareMathOperator{\HFK}{\widehat{HFK}}
\DeclareMathOperator{\HFL}{\widehat{HFL}}
\newcommand{\bC}{\mathbb{C}}
\newcommand{\bQ}{\mathbb{Q}}
\newcommand{\bZ}{\mathbb{Z}}
\author{Yi Xie}
\address{Beijing International Center for Mathematical Research, Peking University, Beijing 100871, China}
\email{yixie@pku.edu.cn}
\author{Boyu Zhang}
\address{Department of Mathematics, Princeton University, New Jersey 08544, USA}
\email{bz@math.princeton.edu}
\title{On links with Khovanov homology of small ranks}
\begin{document}

\begin{abstract}
We classify all links whose Khovanov homology have ranks no greater than $8$, and all three-component links whose Khovanov homology have ranks no greater than $12$, where the coefficient ring is $\bZ/2$. The classification is based on the previous results of Kronheimer-Mrowka \cite{KM:Kh-unknot}, Batson-Seed \cite{Kh-unlink}, Baldwin-Sivek \cite{BS}, and the authors \cite{XZ:forest}.
\end{abstract}

\maketitle

\section{Introduction}

Khovanov homology \cite{Kh-Jones} is a combinatorially defined invariant for oriented links in $S^3$. For a  commutative ring $R$ and an oriented link $L$, the Khovanov homology assigns a bi-graded $R$--module $\Kh(L;R)$. 

The detection properties of Khovanov homology have been studied intensively in the past decade. In 2011, Kronheimer and Mrowka \cite{KM:Kh-unknot} proved that Khovanov homology detects the unknot. Since then, many other detection results of Khovanov homology have been obtained. It is now known that Khovanov homology detects the unlink \cite{Kh-unlink, HN-unlink}, the trefoil \cite{BS}, the Hopf link \cite{BSX}, the forest of unknots \cite{XZ:forest}, the splitting of links \cite{LS-split}, and the torus link $T(2,6)$  \cite{Gage:T26}.

In this paper, we classify all the links $L$ such that $\rank_{\bZ/2}\Kh(L;\bZ/2)\le 8$, and all the 3-component links $L$ such that $\rank_{\bZ/2}\Kh(L;\bZ/2)\le 12$. Since the rank of the Khovanov homology does not depend on the orientation, it makes sense to refer to $\rank_{\bZ/2}\Kh(L;\bZ/2)$ without orienting $L$.

Let $\Khr(L;R)$ be the reduced Khovanov homology of $L$ with coefficient ring $R$.
By \cite[Corollary 3.2.C]{Shu:torsion_Kh}, we have
 $$\rank_{\bZ/2} \Kh(L;\bZ/2) = 2\rank_{\bZ/2}\Khr(L;\bZ/2).$$
 Moreover, the parity of $\rank_{\bZ/2}\Khr(L;\bZ/2)$ is invariant under crossing changes of $L$. Therefore, $\rank_{\bZ/2}\Kh(L;\bZ/2)$ has the form $4k+2\,\, (k\in \bZ)$ if $L$ is a knot, and is a multiple of $4$ if $L$ is a link with at least two components. 
 
 On the other hand, it is well-known that if $L$ is a link with $n$ components, then $\rank_{\bZ/2}\Khr(L;\bZ/2)\ge 2^n$ (see, for example, \cite[Equation (1)]{XZ:forest}). As a consequence, if $L$ is a knot such that $\rank_{\bZ/2}\Khr(L;\bZ/2)\le 8$, then $\rank_{\bZ/2}\Khr(L;\bZ/2)=2$ or $6$, hence $L$ is an unknot or a trefoil by \cite{KM:Kh-unknot,BS}. If $L$ is a  2-component link such that $\rank_{\bZ/2}\Khr(L;\bZ/2)\le 8$, then $\rank_{\bZ/2}\Khr(L;\bZ/2) =4$ or $8$. If $L$ is a 3-component link such that $\rank_{\bZ/2}\Khr(L;\bZ/2)\le 12$, then $\rank_{\bZ/2}\Khr(L;\bZ/2) =8$ or $12$. If $L$ has at least 4 components, then $\rank_{\bZ/2}\Khr(L;\bZ/2) \ge 16$. In \cite{XZ:forest}, the authors have classified all the $n$--component links $L$ with $\rank_{\bZ/2}\Khr(L;\bZ/2)=2^n$. Therefore, the new content of this paper is given by the following two results:
 
 \begin{Theorem}\label{thm_L4a1}
Suppose $L$ is a 2-component link in $S^3$, then
$$
\rank_{\bZ/2}\Kh(L;\mathbb{Z}/2)=8
$$ 
if and only if $L$ is isotopic to the link L4a1 in the Thistlethwaite link table, which is the link given by Figure \ref{fig_L4a1}, or its mirror image.
\end{Theorem}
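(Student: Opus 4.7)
The ``if'' direction reduces to the direct computation $\rank_{\bZ/2}\Kh(\text{L4a1};\bZ/2)=8$. For the converse, suppose $L=L_1\cup L_2$ satisfies $\rank_{\bZ/2}\Kh(L;\bZ/2)=8$; I would carry out a three-step squeeze: constrain the components, then the double branched cover, then identify $L$.

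\emph{Step 1 (components).} The Batson-Seed spectral sequence \cite{Kh-unlink} gives
\[
\rank_{\bZ/2}\Kh(L;\bZ/2)\;\geq\;\rank_{\bZ/2}\Kh(L_1;\bZ/2)\cdot\rank_{\bZ/2}\Kh(L_2;\bZ/2),
\]
with equality characterizing split links by Lipshitz-Sarkar \cite{LS-split}. Combined with Kronheimer-Mrowka's unknot detection \cite{KM:Kh-unknot}, which together with the parity recalled in the introduction implies that every non-trivial knot has Khovanov rank at least $6$, the only way to keep the right-hand side $\leq 8$ is to take both components unknotted. Since $\rank\Kh(L)=8>4=\rank\Kh(U\sqcup U)$, the link $L$ is also non-split.

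\emph{Step 2 (double cover).} Apply the Ozsv\'ath-Szab\'o spectral sequence from $\Khr(\bar L;\bZ/2)$ to $\widehat{HF}(\Sigma(L);\bZ/2)$, together with the inequality $\rank\widehat{HF}(Y;\bZ/2)\geq|H_1(Y;\bZ)|$ for rational homology spheres. Since $\rank_{\bZ/2}\Khr(L;\bZ/2)=4$, this bounds $|\det L|\leq 4$. The usual parity of the determinant for $2$-component links then forces $\det L\in\{2,4\}$, and in the boundary case $\det L=4$ the branched cover $\Sigma(L)$ is a $\bZ/2$-L-space of order $4$.

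\emph{Step 3 (identification, the main obstacle).} I would first exclude $\det L=2$: such an $L$ forces $\Sigma(L)\cong\bR P^3$, and combining the classification of $2$-component links with unknotted components and branched cover $\bR P^3$ with the Hopf link detection of Baldwin-Sivek-Xie \cite{BSX} should pin $L$ down as the Hopf link, contradicting $\rank\Kh(L)=8$. The remaining case $\det L=4$ demands showing that the only non-split $2$-component link with unknotted components and $\Sigma(L)\in\{L(4,\pm 1)\}$ is $\text{L4a1}$ or its mirror. I expect this to require an instanton-theoretic refinement of the above---Kronheimer-Mrowka's spectral sequence from reduced Khovanov to singular instanton link homology, combined with sutured-instanton techniques in the spirit of \cite{XZ:forest}---to pin down the bigraded structure of $\Kh(L)$ and thereby complete the identification. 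The hardest point will be obtaining enough control on the bigraded structure, not just the total rank, to separate $\text{L4a1}$ from hypothetical competitors realizing the same double cover and the same total Khovanov rank.
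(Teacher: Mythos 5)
Your Step 1 is correct and coincides with the paper's opening move: Batson--Seed, the parity of the reduced rank under crossing changes, and Kronheimer--Mrowka's unknot detection together force both components to be unknots. After that, however, your argument has genuine gaps. In Step 2 the inequality $\rank_{\bZ/2}\widehat{HF}(\Sigma(L);\bZ/2)\ge |H_1(\Sigma(L);\bZ)|$ requires $\Sigma(L)$ to be a rational homology sphere, i.e.\ $\det L\neq 0$; a two-component link with unknotted components can have vanishing determinant (this happens only when the linking number is zero, but that case must still be ruled out and is not addressed). More seriously, Step 3 does not close: knowing that $\Sigma(L)$ is a $\bZ/2$--L-space with $|H_1|=4$ does not imply $\Sigma(L)\cong L(4,\pm1)$ --- the quaternionic space $S^3/Q_8$, for example, is an L-space whose first homology has order $4$ --- so you cannot reduce to links with lens-space double branched covers; and even if you could, you would need the Hodgson--Rubinstein theorem to conclude that such a link is $2$-bridge. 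You yourself flag the identification step as ``the main obstacle'' and only express the hope that instanton-theoretic refinements will finish it, so as written the converse direction is not proved.

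For comparison, the paper takes a different route after Step 1. It views $K_1$ as a knot in the solid torus $S^3-N(K_2)$ and uses Kronheimer--Mrowka's spectral sequence together with the identification $\II^\natural(L,p;\bC)\cong\AHI(K_1)$ to get $\dim_\bC\AHI(K_1)\le 4$; the mod--$2$ homotopy invariance of $\dim\AHI(\cdot,i)$, combined with the meridional-surface and braid-detection theorems of \cite{XZ:excision}, then shows either that $L$ is split (excluded by the rank hypothesis) or that $L$ is exchangeably braided with each component a closed $l$-braid about the other, $l=|\lk(K_1,K_2)|$. Finally, a computation with the reduced Burau representation, Morton's formula, and the Torres condition shows that an exchangeably braided link with $l\ge 3$ has coefficient sum of $(x-1)(y-1)\Delta_L(x,y)$ at least $12$, which via Dowlin's spectral sequence contradicts $\rank_\bQ\Khr(L;\bQ)\le 4$; hence $l\le 2$, and $L$ is the Hopf link (excluded) or L4a1. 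To salvage your branched-cover approach you would at minimum need to handle $\det L=0$, upgrade ``$\bZ/2$--L-space of order $4$'' to an actual identification of $\Sigma(L)$, and then invoke Hodgson--Rubinstein; the paper's argument avoids all of this.
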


\begin{remark}
	In \cite[Corollary 1.4]{XZ:forest}, the authors proved that Khovanov homology (together with the bi-grading) distinguishes an  oriented link whose underlying un-oriented link is isotopic to L4a1. Theorem \ref{thm_L4a1} is a stronger version of that result. 
\end{remark}

\begin{figure}
\centering
\begin{minipage}{.5\textwidth}
  \centering
  \includegraphics[width=.6\linewidth]{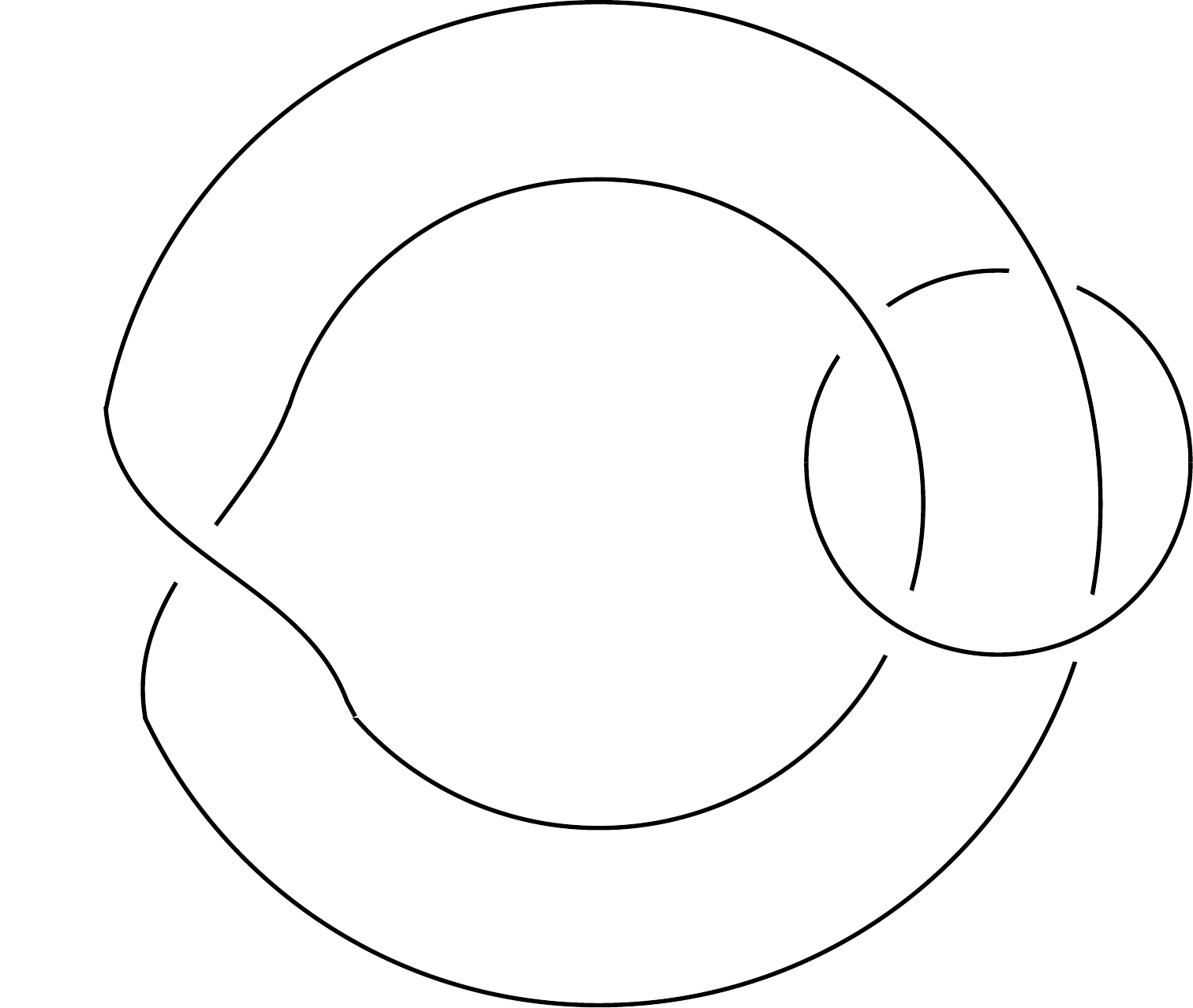}
  \captionof{figure}{The link L4a1.}
  \label{fig_L4a1}
\end{minipage}%
\begin{minipage}{.5\textwidth}
  \centering
  \includegraphics[width=.6\linewidth]{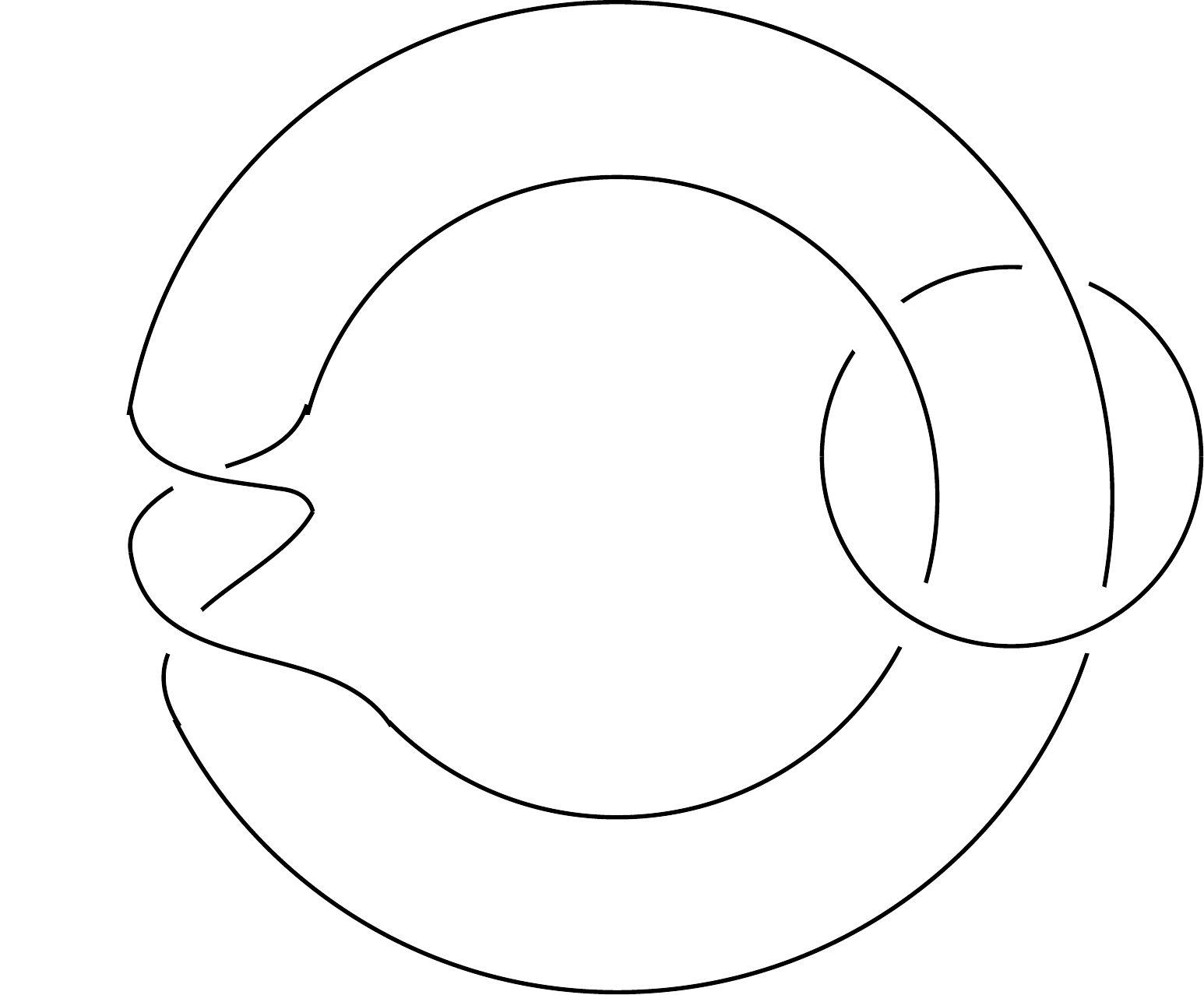}
  \captionof{figure}{The link L6n1.}
  \label{fig_L6n1}
\end{minipage}
\end{figure}

\begin{Theorem}\label{thm_L6n1}
Suppose $L$ is a three-component link in $S^3$, then
$$\rank_{\mathbb{Z}/2}\Kh(L;\mathbb{Z}/2)=12$$
 if and only if $L$ is isotopic to the link L6n1 in
the Thistlethwaite link table, which is the link given by Figure \ref{fig_L6n1}, or its mirror image.
\end{Theorem}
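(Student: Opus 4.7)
The plan is to adapt the framework of \cite{XZ:forest}, which couples spectral sequences from reduced Khovanov homology to Floer-theoretic invariants with branched-double-cover techniques, and then invoke the detection theorems already cited in the introduction to complete the classification. The ``if'' direction is a direct computation: L6n1 is the $(3,3)$-torus link, and its Khovanov homology is easily verified to have rank $12$ over $\bZ/2$ (either from existing tables or a short calculation).

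For the converse, let $L$ be a $3$-component link with $\rank_{\bZ/2}\Kh(L;\bZ/2)=12$. The first step is to rule out split decompositions. Over $\bZ/2$, Khovanov homology is multiplicative under disjoint union, so $\rank_{\bZ/2}\Kh(L_1\sqcup L_2;\bZ/2)=\rank_{\bZ/2}\Kh(L_1;\bZ/2)\cdot\rank_{\bZ/2}\Kh(L_2;\bZ/2)$. Combined with the divisibility constraints recalled in the introduction---knot ranks lie in $4\bZ+2$, multi-component link ranks lie in $4\bZ$, and the minimal possible ranks for $1$- and $2$-component links are $2$ and $4$---no product of admissible ranks equals $12$, so $L$ must be non-split.

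Next, apply a spectral sequence of Kronheimer--Mrowka type from $\Khr(L;\bZ/2)$ to the framed instanton homology $I^\#(\Sigma(L);\bC)$ of the branched double cover, giving $\dim_{\bC} I^\#(\Sigma(L);\bC)\leq 6$. Combined with constraints on $b_1(\Sigma(L))$ obtained from the mod-$2$ linking matrix of $L$, the rank bound restricts $\Sigma(L)$ to a very short list of candidate $3$-manifolds, expected to consist essentially of the Seifert-fibered cover $\Sigma(\text{L6n1})$ together with possibly a few manifolds that can be eliminated by invoking the existing detection results for the unknot, unlink, trefoil, Hopf link, and forest of unknots to rule out $L$ being a satellite, cable, or connected sum of simpler links with small rank.

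The main obstacle, as in \cite{XZ:forest}, is the identification step: one must show that $\Sigma(L)\cong\Sigma(\text{L6n1})$ and that the covering involution on $\Sigma(L)$ agrees up to conjugation with the covering involution of $\Sigma(\text{L6n1})\to S^3$, so that an equivariant recovery argument (via Smith theory or a Montesinos-type classification of involutions on Seifert-fibered spaces) yields $L\cong\text{L6n1}$ up to mirror. Ruling out exotic involutions on the candidate manifold that would quotient to a different $3$-component link with the same Khovanov rank is the most delicate part of the argument, and is expected to require additional equivariant Floer-theoretic input building on \cite{KM:Kh-unknot,BS,XZ:forest}.
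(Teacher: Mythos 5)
Your ``if'' direction is fine, but the converse is a strategy outline rather than a proof, and the strategy itself does not go through. First, the framework you attribute to \cite{XZ:forest} is mischaracterized: that paper (and the present one) does not use branched double covers at all; it uses the Kronheimer--Mrowka spectral sequence to reduced singular instanton homology, reinterpreted via the annular instanton homology $\AHI$ of a sublink sitting in the solid-torus complement of one component. Second, the two steps you lean on are not available: there is no theorem restricting a $3$-manifold $Y$ with $\dim_\bC I^\#(Y;\bC)\le 6$ (or $\widehat{HF}$ of that rank) to ``a very short list,'' especially with $b_1(\Sigma(L))>0$ as happens here; and even if $\Sigma(L)\cong\Sigma(\mathrm{L6n1})$ were established, recovering $L$ requires classifying the covering involution, which you explicitly flag as open (``expected to require additional equivariant Floer-theoretic input''). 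A step whose validity is ``expected'' is a gap, and here it is the entire content of the theorem. The non-splitness observation via multiplicativity of $\bZ/2$-Khovanov homology is correct but does not advance the argument.

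For comparison, the paper's proof has a completely different and fully elementary-to-execute structure. Batson--Seed's inequality gives $\rank_{\bZ/2}\Kh(K_i\cup K_j;\bZ/2)\le 6$ for each $2$-component sublink; the parity argument (reduced rank mod $2$ is a crossing-change invariant) improves this to $4$, so by \cite[Theorem 1.2]{XZ:forest} each $K_i\cup K_j$ is a Hopf link or unlink, and all pairwise linking numbers are $0$ or $\pm1$. One then chooses $K_3$ with $|\lk(K_3,K_1)|=|\lk(K_3,K_2)|$, uses Proposition \ref{prop_Kh>AHI} to get $\dim_\bC\AHI(K_1\cup K_2)\le 6$ in $S^3-N(K_3)$, and plays the lower bound of Proposition \ref{prop_AHI>UkKl} against the parity statement of Proposition \ref{prop_AHI_parity} and the symmetry \eqref{eqn_f_grading_symmetric_at_zero} to pin down $\AHI(K_1\cup K_2)$ completely in each linking-number case. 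Theorem \ref{Theorem-2g+n} kills the linking-number-zero case (it forces $L$ split, contradicting the rank), and Proposition \ref{AHI-braid-detection} shows in the remaining case that $K_1\cup K_2$ is a closed $2$-braid about $K_3$; since $K_1\cup K_2$ is an unlink or Hopf link, the braid is trivial or $\sigma_1^{\pm2}$, and only the latter has rank $12$. If you want to salvage your write-up, you should replace the branched-double-cover discussion with this sublink/annular-instanton analysis.
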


Combining Theorem \ref{thm_L4a1}, Theorem \ref{thm_L6n1}, and the results in \cite{KM:Kh-unknot,BS,XZ:forest}, we have the following two corollaries.

\begin{Corollary}\label{cor_list_Kh_<8}
Suppose $L\subset S^3$ is a link such that $\rank_{\bZ/2}\Kh(L;\mathbb{Z}/2)\le 8$, then $L$ is isotopic to
one of the following:
\begin{itemize}
\item the unlink with at most 3 components;

\item the left-handed or right-handed trefoil;

\item the Hopf link;

\item the connected sum of two Hopf links;

\item the disjoint union of a Hopf link and an unknot;

\item the link L4a1 or its mirror image. \qed
\end{itemize}
\end{Corollary}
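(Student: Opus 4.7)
The plan is a case analysis on the number of components $n$ of $L$. Write $r := \rank_{\bZ/2}\Kh(L;\bZ/2)$, so the hypothesis reads $r\le 8$. The standard lower bound $r\ge 2^n$ recalled in the introduction immediately forces $n\le 3$, and I will treat each of $n=1,2,3$ separately, using in each case the parity restriction on $r$ already noted in the introduction.

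For $n=1$ the parity argument gives $r\in\{2,6\}$. If $r=2$ then $L$ is the unknot by Kronheimer--Mrowka~\cite{KM:Kh-unknot}, while if $r=6$ then $L$ is a left- or right-handed trefoil by Baldwin--Sivek~\cite{BS}. For $n=2$ the allowed values are $r=4$ and $r=8$. When $r=4=2^n$, the link $L$ realizes the minimal Khovanov rank among two-component links, so by the classification of minimal-rank links in \cite{XZ:forest} it is a two-component forest of unknots; a direct enumeration of forests on two vertices shows these are the two-component unlink and the Hopf link. When $r=8$, Theorem~\ref{thm_L4a1} of the present paper identifies $L$ with L4a1 or its mirror image.

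For $n=3$ the lower bound $r\ge 8$ together with $r\le 8$ forces $r=8=2^n$, so \cite{XZ:forest} again applies and $L$ is a three-component forest of unknots; the three possible forests on three vertices correspond, respectively, to the three-component unlink, the disjoint union of a Hopf link and an unknot, and the connected sum of two Hopf links. Gathering the links produced across $n=1,2,3$ yields exactly the list in the statement.

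I do not expect any substantive obstacle: given the cited detection theorems together with Theorem~\ref{thm_L4a1}, the corollary is essentially a bookkeeping exercise. The only point requiring mild care is the enumeration of forests of unknots with two or three components, which is immediate once one recalls their description as trees of Hopf-linked unknots.
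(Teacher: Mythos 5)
Your proof is correct and follows essentially the same route as the paper: the parity and $2^n$ lower-bound constraints from the introduction reduce the problem to the minimal-rank cases (handled by the classification of forests of unknots in \cite{XZ:forest}, together with \cite{KM:Kh-unknot} and \cite{BS} for knots) plus the single non-minimal case $n=2$, $\operatorname{rank}=8$, which is Theorem \ref{thm_L4a1}. The enumeration of two- and three-component forests of unknots is exactly as you describe, so there is nothing further to add.
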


\begin{Corollary}
	Suppose $L\subset S^3$ is a link with three components  such that $$\rank_{\bZ/2}\Kh(L;\mathbb{Z}/2)\le 12,$$ then $L$ is isotopic to
one of the following:
\begin{itemize}
\item the unlink with 3 components;
\item the connected sum of two Hopf links;
\item the disjoint union of a Hopf link and an unknot;
\item the link L6n1 or its mirror image. \qed
\end{itemize}
\end{Corollary}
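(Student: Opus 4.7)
The plan is to combine the numerical analysis from the introduction with Theorem \ref{thm_L6n1} and the classification of minimum-rank links from \cite{XZ:forest}. As recorded in the introduction, $\rank_{\bZ/2}\Kh(L;\bZ/2)$ is divisible by $4$ and bounded below by $2^{3}=8$ for any 3-component link $L$; under the hypothesis $\rank_{\bZ/2}\Kh(L;\bZ/2)\le 12$ this will force the rank to be either exactly $8$ or exactly $12$, so I will split into these two cases.

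In the case of rank $12$, I will invoke Theorem \ref{thm_L6n1} directly to identify $L$ with L6n1 or its mirror image. In the case of rank $8$, the link $L$ will saturate the lower bound $2^{n}$ for $n=3$, so I plan to apply the classification of minimum-rank $n$-component links from \cite{XZ:forest}, which forces $L$ to be a \emph{forest of unknots}---a link obtained from finitely many unknots by iterating disjoint unions and connected sums with Hopf links in a forest-shaped combinatorial pattern. I will then enumerate the forests on three vertices (the empty graph; a single edge together with an isolated vertex; a path of length two) to recover exactly the three remaining entries in the list: the 3-component unlink, the disjoint union of a Hopf link with an unknot, and the connected sum of two Hopf links.

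Since every nontrivial input is already in place, the corollary itself will present no real obstacle; all of the substance lies in Theorem \ref{thm_L6n1}, whose proof occupies the main body of the paper, and in the earlier classification \cite{XZ:forest}.
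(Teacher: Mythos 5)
Your proposal is correct and matches the paper's (implicit) argument exactly: the paper derives the rank dichotomy $8$ or $12$ from the same divisibility-by-$4$ and $2^n$ lower-bound observations in the introduction, handles rank $12$ via Theorem \ref{thm_L6n1}, and handles rank $8$ via the forest-of-unknots classification of \cite{XZ:forest}, with the three forests on three vertices giving the three remaining links. Nothing further is needed.
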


\section{Preliminaries}

Let $L$ be a link $L$ in the (framed) solid torus $S^1\times D^2$, its annular instanton Floer homology $\AHI(L)$ is defined
in \cite{AHI}, and the theory is further developed by \cite{XZ:excision,XZ:forest}. 
This section reviews
 several results from \cite{AHI,XZ:excision,XZ:forest} that will be used later.
 
The annular instanton Floer homology is a $\bZ$-graded complex vector space, and the grading is called the f-grading. We use
$
\AHI(L,i)
$
to denote the component of $\AHI(L)$ with f-degree $i$. For each $i\in \bZ$, we have
\begin{equation}\label{eqn_f_grading_symmetric_at_zero}
\AHI(L,i)\cong \AHI(L,-i).
\end{equation}

We recall the following definition from \cite[Definition 1.5]{XZ:excision}.
\begin{Definition}\label{def_meridional_surface}
A properly embedded, connected 
surface $S\subset S^1\times D^2$ is called a \emph{meridional surface} if $\partial S$ is a meridian of $S^1\times D^2$. 
\end{Definition}

We have the following two results.

\begin{Theorem}[{\cite[Theorem 8.2]{XZ:excision}}]\label{Theorem-2g+n}
Given a link $L$ in $S^1\times D^2$, let $S$ be a meridional surface that intersects $L$ transversely. Let $g$ be the genus of $S$,
and let $n=|S\cap L|$. Suppose $S$ minimizes the value of $(2g+n)$ among meridional surfaces that intersect $L$ transversely,
then we have
\begin{equation*}
\AHI(L,\pm(2g+n))\neq 0,
\end{equation*}
and
\begin{equation*}
\AHI(L,i)= 0
\end{equation*}
for all $|i|> 2g+n$.

\end{Theorem}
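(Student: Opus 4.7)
The statement consists of two separate assertions: a vanishing result $\AHI(L,i)=0$ for $|i|>2g+n$, and a non-vanishing result $\AHI(L,\pm(2g+n))\neq 0$. The symmetry \eqref{eqn_f_grading_symmetric_at_zero} immediately reduces the non-vanishing assertion to the single extremal degree $i=2g+n$. My plan is to attack both halves by exploiting the geometric interpretation of the f-grading as a generalised eigenspace decomposition of a $\mu$-map operator associated with a surface built from $S$ inside the closure used to define $\AHI$.

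For the vanishing half, I would pass to the chosen closure of the complement of $L$ in $S^1\times D^2$ and turn $S$ into a closed oriented surface $\widehat{S}$ by capping off its meridional boundary inside the closing piece. The resulting $\widehat{S}$ is pierced by $L$ in $n$ points, and a short Euler-characteristic computation identifies the Kronheimer--Mrowka adjunction bound coming from $\widehat{S}$ with precisely $2g+n$. Feeding $\widehat{S}$ into the instanton adjunction inequality then kills every f-grading summand with $|i|>2g+n$, which is exactly the desired vanishing.

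For the non-vanishing half, I would use the excision principle of \cite{XZ:excision}. The natural move is to cut $(S^1\times D^2,L)$ along the minimising surface $S$, producing a sutured manifold $(M,\gamma)$ whose sutures record the meridional boundary of $S$ together with the link meridians at the $n$ intersection points of $S$ with $L$. A gluing/excision argument then identifies the extremal summand $\AHI(L,2g+n)$ with the sutured instanton Floer homology $\SHI(M,\gamma)$, possibly after tensoring with a fixed finite-dimensional factor. Because $S$ minimises $2g+n$, the sutured manifold $(M,\gamma)$ should turn out to be taut, and then $\SHI(M,\gamma)\neq 0$ by the Kronheimer--Mrowka non-vanishing theorem for taut sutured instanton Floer homology, giving $\AHI(L,2g+n)\neq 0$.

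The main obstacle I anticipate lies in the non-vanishing half, and in particular in two points. First, one has to show that the excision procedure really does cut out the top f-grading piece on the nose, rather than mixing it with lower gradings; this requires tracking the eigenspaces of the $\mu$-map through the excision cobordism. Second, one has to deduce tautness of $(M,\gamma)$ from the combinatorial minimality of $2g+n$, which amounts to an outermost-disc argument ruling out compressing and product discs whose existence would contradict the minimising hypothesis on $S$. By contrast, the vanishing half is essentially an off-the-shelf adjunction application once $\widehat{S}$ has been constructed.
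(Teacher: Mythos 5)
First, a point of reference: this paper does not prove Theorem \ref{Theorem-2g+n} at all --- it is imported verbatim from \cite[Theorem 8.2]{XZ:excision} --- so your proposal has to be measured against the proof in that reference rather than anything in the present text. Your outline does track the broad strategy used there (an eigenvalue/adjunction bound for the vanishing statement, and a surface-decomposition-into-sutured-instanton-homology argument for the non-vanishing statement), but as written it defers exactly the two steps that constitute the theorem. The ``anticipated obstacles'' you list --- that the decomposition along $S$ isolates precisely the top f-grading summand as an $\SHI$ group, and that the resulting sutured manifold is taut --- are not technical details to be checked later; they are the entire content. The first is a Juh\'asz-type surface-decomposition theorem for instanton Floer homology and is the main technical result of \cite{XZ:excision}. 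The second does not follow formally from minimality of $2g+n$: one must show a minimizer can be chosen so that the complementary sutured manifold has no compressing or essential product disks, and the moves one would use (compressing $S$, tubing off intersection points with $L$) change $g$ and $n$ by different amounts while being constrained by the requirement that $\partial S$ remain a single meridian, so there is a genuine argument to be made. A plan that names these as obstacles without resolving them has not engaged with the proof.

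Second, the vanishing half is not ``off-the-shelf'' either, because the numerology in your capping step is off. If you complete $S$ by a disk in the closing piece, you obtain a closed genus-$g$ surface meeting the augmented link in $n$ (or $n+1$, if it also meets the auxiliary component) points, and the Kronheimer--Mrowka eigenvalue bound for $\mu^{\mathrm{orb}}$ of such a surface is $2g-2+n$ (resp.\ $2g+n-1$), which is strictly smaller than $2g+n$. That bound is actually \emph{false} as a vanishing statement for $\AHI$: for the closed $l$-strand braid $\mathcal{K}_l$ the meridional disk has $2g+n=l$ and the top f-degree of $\AHI(\mathcal{K}_l)$ is exactly $l$, not $l-1$. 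The correct bound $2g+n$ emerges only because, in the closure used to define the f-grading, the boundary of a meridional surface is completed by a piece that raises the genus by one, so that the adjunction bound reads $2(g+1)-2+n=2g+n$. So before the adjunction inequality can be applied you must set up the completion of $S$ consistently with the definition of the f-grading operator; with a disk cap your argument would ``prove'' a stronger statement that the examples already contradict.
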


\begin{Proposition}[{\cite[Corollary 8.4]{XZ:excision}}]\label{AHI-braid-detection}
Let $L$ be a link in $S^1\times D^2$, let $n$ be a positive integer.
Then $L$ is isotopic to the closure of 
a braid with $n$ strands
if and only if the top f-grading of $\AHI(L)$ is $n$ and $\AHI(L,n)\cong \bC.$
\end{Proposition}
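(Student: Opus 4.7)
The plan is to combine Theorem \ref{Theorem-2g+n} with the topological observation that $L\subset S^1\times D^2$ is a closed $n$-braid precisely when it admits a meridional disk $D$ with $|D\cap L|=n$ and all intersections of the same sign.

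For the forward direction, assume $L$ is the closure of an $n$-strand braid. The standard braid disk $D$ satisfies $2g(D)+|D\cap L|=n$, while every meridional surface $S$ has $|S\cap L|\geq n$ because the algebraic intersection equals the winding number $n$. Thus $D$ minimizes $2g+|S\cap L|$, and Theorem \ref{Theorem-2g+n} identifies $n$ as the top f-grading of $\AHI(L)$. To upgrade the nonvanishing $\AHI(L,n)\neq 0$ to $\AHI(L,n)\cong\bC$, I would apply an excision/sutured-decomposition argument along $D$: cutting $S^1\times D^2$ open along $D$ produces a product tangle in $D^2\times[0,1]$, and the top f-grading component of $\AHI(L)$ should be identified with the instanton Floer homology of this product, which is one-dimensional by an induction on the braid word (it holds for the identity tangle and is preserved under inserting a single crossing).

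For the backward direction, suppose the top f-grading is $n$ and $\AHI(L,n)\cong\bC$. By Theorem \ref{Theorem-2g+n} there exists a meridional surface $S$ realizing $2g(S)+|S\cap L|=n$. The hypothesis $\dim\AHI(L,n)=1$ should force $g(S)=0$, so that $S$ is a meridional disk meeting $L$ in exactly $n$ points. A further topological argument then shows that these $n$ intersections all have the same sign, so that $L$ is in braid position with respect to the fibration $S^1\times D^2\to S^1$, and $L$ is isotopic to a closed $n$-braid.

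The main obstacle is the genus-reduction step: deducing $g(S)=0$ from $\dim\AHI(L,n)=1$. I would implement this via an excision argument analogous to Kronheimer--Mrowka's treatment of genus-detecting surface operators. Namely, one caps off $\partial S$ inside an auxiliary piece to produce a closed surface $\widehat{S}$ of genus $g(S)$ in a closed $3$-manifold, and identifies $\AHI(L,n)$ with the top eigenspace of the $\mu(\widehat{S})$-action on the resulting instanton Floer group. A closed surface of positive genus yields a top eigenspace of dimension at least $2$, contradicting $\AHI(L,n)\cong\bC$ whenever $g(S)\geq 1$; this is the most technical step and is where I expect the bulk of the work to lie.
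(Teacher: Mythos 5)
The paper does not prove this proposition at all: it is quoted verbatim as \cite[Corollary 8.4]{XZ:excision} and used as a black box, so there is no internal proof to compare against. Judged on its own terms, your sketch has the right overall architecture for the result in that reference (Theorem \ref{Theorem-2g+n} plus an excision argument identifying the top f-graded piece with a sutured instanton homology of the cut-open complement), and the forward direction is essentially sound: the braid disk realizes $2g+n=n$, minimality follows from the winding-number bound, and the top piece is the sutured instanton homology of the braid complement in $D^2\times[0,1]$, which is a product sutured manifold (no induction on the braid word is needed -- transversality of the braid to the disk fibration gives the product structure directly), hence rank one by Kronheimer--Mrowka.

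The genuine gap is in your mechanism for the backward direction. The claim that a closed surface $\widehat{S}$ of positive genus forces the top $\mu(\widehat{S})$-eigenspace to have dimension at least $2$ is false: by Kronheimer--Mrowka's fibredness detection, that eigenspace is one-dimensional precisely when the ambient manifold fibers over $S^1$ with fiber $\widehat{S}$, and this happens for fibers of every genus. So $\dim\AHI(L,n)=1$ does not contradict $g(S)\geq 1$ by eigenspace-counting; what it actually gives you (via the rank-one detection of product sutured manifolds applied to the complement cut along the minimizing surface $S$) is that $(S^1\times D^2)\setminus N(L)$ fibers over $S^1$ with fiber $S$ punctured along $L$. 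The genus reduction and the same-sign condition then both follow from topology rather than Floer theory: the fibration extends over $N(L)$ with $L$ transverse to the fibers, a fibration of the solid torus over $S^1$ whose fiber has meridional boundary must have disk fibers (the fiber is incompressible, and the only incompressible meridional surface in a solid torus is a disk), and a link transverse to the disk fibration of $S^1\times D^2$ is exactly a closed braid, with all intersections with a fiber automatically of the same sign. Rerouting your argument through the product-sutured-manifold detection theorem rather than an eigenspace dimension count closes both of the steps you flagged as incomplete.
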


If $K\subset S^3$ is a knot, we will use $N(K)$ to denote the open tubular neighborhood of $K$.
 Suppose $K$ is an unknot, then $S^3-N(K)$ is a solid torus. Choose the framing of $S^3-N(K)$ such that the preferred longitude of $S^3-N(K)$ is a meridian of $K$. Then for every link $L$ that is disjoint from $N(K)$, we can take the annular instanton Floer homology $\AHI(L)$ by viewing $L$ as a link in $S^3-N(K)$.  Notice that in this case, a meridional surface of $S^3-N(K)$ induces a Seifert surfaces of $K$ and vice versa.
 
 The following proposition establishes a relation between the annular instanton Floer homology and the reduced Khovanov homology.

\begin{Proposition}\label{prop_Kh>AHI}
Suppose $L\subset S^3$ is a link, $U\subset L$ is a component of $L$ that is an unknot, and let $p\in U$ be a base point on $U$. Let $L_0=L-U$, then $L_0$ is a link in the solid torus $S^3-N(U)$. We have
$$
\rank_{\bZ/2}\Khr(L;\bZ/2)\ge \dim_\bC \AHI(L_0).
$$ 

\end{Proposition}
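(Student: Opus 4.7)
The plan is to combine two ingredients: Kronheimer--Mrowka's spectral sequence from reduced Khovanov homology to reduced singular instanton link homology, and an identification of the latter with $\AHI(L_0)$ when the basepoint lies on an unknotted component. Neither ingredient is new, but assembling them in this precise form gives the stated inequality.

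First, I would invoke the main spectral sequence of \cite{KM:Kh-unknot}: for a based link $(L,p) \subset S^3$, there is a spectral sequence whose $E_2$-page is $\Khr(L;\bZ/2)$ (up to mirroring, which preserves total $\bZ/2$-rank) and which abuts to the reduced singular instanton link homology $\II^\natural(L,p)$. Since differentials can only decrease total rank, this yields
$$\rank_{\bZ/2}\Khr(L;\bZ/2) \;\ge\; \dim_\bC \II^\natural(L,p).$$
So it is enough to show $\II^\natural(L,p) \cong \AHI(L_0)$ under the hypothesis that $p$ lies on an unknotted component $U$ and $L_0 = L - U$.

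For this identification, the key observation is that the framing convention on $S^3 - N(U)$ specified in the proposition—namely, the preferred longitude of $S^3 - N(U)$ is a meridian of $U$—is exactly what is needed to match the two theories. Under this choice, a compressing disk of $S^1 \times D^2 = S^3 - N(U)$ is a meridional disk of $U$ in $S^3$, and the arc between $p$ and the auxiliary basepoint used to set up $\II^\natural$ can be pushed to a fiber of $S^1 \times D^2$. This is the viewpoint adopted in the definition of $\AHI$ in \cite{AHI} and in the sutured/excision reformulations of \cite{XZ:excision,XZ:forest}: in sutured language, both $\II^\natural(L,p)$ and $\AHI(L_0)$ compute the sutured instanton homology of the same sutured manifold (the exterior of $L \cup \{p\text{-arc}\}$ with the standard meridional sutures on $\partial N(U)$ and on $\partial N(L_0)$), with the same bundle data.

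The main obstacle, and really the only substantive point, is to verify that conventions match on the nose—framings on $\partial N(U)$, the location of basepoint arcs, and the auxiliary bundle/holonomy data used to define $\II^\natural$ versus $\AHI$. Once these are aligned, the identification $\II^\natural(L,p) \cong \AHI(L_0)$ is essentially tautological, and combining it with the Kronheimer--Mrowka rank inequality from the first step delivers the conclusion
$$\rank_{\bZ/2}\Khr(L;\bZ/2) \;\ge\; \dim_\bC \AHI(L_0).$$
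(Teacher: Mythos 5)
Your proposal follows essentially the same route as the paper: the Kronheimer--Mrowka spectral sequence bounds $\dim_\bC \II^\natural(L,p)$ by the rank of reduced Khovanov homology of the mirror, and the identification $\II^\natural(L,p)\cong \AHI(L_0)$ (which the paper cites as \cite[Proposition 2.6]{XZ:forest} rather than re-deriving) finishes the argument. The only bookkeeping you elide is that the spectral sequence naturally compares $\II^\natural$ with $\Khr(\bar L;\bC)$, so one needs the universal coefficient theorem to pass from $\dim_\bC\Khr(L;\bC)$ to $\rank_{\bZ/2}\Khr(L;\bZ/2)$; this is a minor point and does not affect correctness.
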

\begin{proof}
By Kronheimer-Mrowka's spectral sequence \cite[Theorem 8.2]{KM:Kh-unknot}, we have
$$
\dim_{\bC}\II^\natural(L,p;\bC)\le \dim_{\bC}\Khr(\bar{L},\bar{p};\bC),
$$
where $(\bar{L},\bar{p})$ is the mirror image of $(L,p)$, and $\II^\natural$ is the reduced singular instanton Floer 
homology introduced in \cite{KM:Kh-unknot}.
By \cite[Corollary 11]{Kh-Jones}, we have
$$
\dim_{\bC}\Khr(\bar{L},\bar{p};\bC)=\dim_{\bC}\Khr({L},p;\bC).
$$
By the universal coefficient theorem,
$$
\dim_{\bC}\Khr({L};\bC)\le \rank_{\bZ/2}\Khr(L;\bZ/2).
$$
By \cite[Proposition 2.6]{XZ:forest}, 
$\II^\natural(L,p;\bC)\cong \AHI(L_0).$
Therefore the proposition is proved.
\end{proof}

Let $\mathcal{U}_k\subset S^1\times D^2$ be the unlink with $k$ components, and let
$\mathcal{K}_l\subset S^1\times D^2$ be the link given by  $S^1\times \{p_1,\cdots,p_l\}$. 
Let $\mathcal{U}_k\sqcup\mathcal{K}_l$ be the disjoint union of 
 $\mathcal{U}_k$ and $\mathcal{K}_l$ such that $\mathcal{U}_k$ is included in a 3-ball disjoint from 
 $\mathcal{K}_l$. By \cite[Example 4.2, Proposition 4.3]{AHI},
 $$
 \AHI(\mathcal{U}_k\sqcup \mathcal{K}_l)\cong  \bC^{2^k}_{(0)}\otimes (\bC_{(1)}\oplus\bC_{(-1)})^{\otimes l},
 $$
 where the subscripts represent the f-gradings.

\begin{Proposition}[{\cite[Proposition 4.3]{XZ:forest}}]\label{prop_AHI>UkKl}
Suppose $L\subset S^1\times D^2$ is an oriented link such that every component of $L$
has winding number $0$ or $\pm 1$. Assume there are $k$ components with winding number $0$ and $l$ components
with winding number $\pm 1$,  then we have
\begin{equation*}
\dim_{\bC}\AHI(L,i)\ge \dim_{\bC} \AHI(\mathcal{U}_k\sqcup\mathcal{K}_l,i)
\end{equation*}
for all $i\in\mathbb{Z}$.
\end{Proposition}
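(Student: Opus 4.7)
The statement is a graded dimension inequality comparing $\AHI(L)$ with that of the standard model $\mathcal{U}_k\sqcup\mathcal{K}_l$, whose f-graded dimensions were recalled immediately above. I would prove it by induction on a diagram complexity of $L$, specifically the minimal number of crossing changes needed to isotope $L$ to $\mathcal{U}_k\sqcup\mathcal{K}_l$ through an isotopy keeping every intermediate link within the winding-number $\{0,\pm 1\}$ class. The base case is $L=\mathcal{U}_k\sqcup\mathcal{K}_l$, where the inequality is an equality.

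For the inductive step I would apply the unoriented skein exact triangle for $\AHI$ at a carefully chosen crossing, which relates $\AHI(L_+)$, $\AHI(L_-)$, and $\AHI(L_0)$ for the oriented resolution $L_0$, with f-grading shifts determined by the intersection of the affected strands with a meridional disk. The crossing should be chosen so that the crossing change strictly decreases the complexity, and so that $L_0$ also has all components of winding $\{0,\pm 1\}$; this is automatic when the two strands have matching winding numbers and requires a small case-check in mixed situations. Applying the inductive hypothesis to $L_\mp$ and to $L_0$ then should yield the desired lower bound on each graded piece of $\AHI(L)$.

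The principal obstacle is a direction mismatch: the skein exact triangle supplies only upper bounds of the form $\dim_{\bC}\AHI(X)\le \dim_{\bC}\AHI(Y)+\dim_{\bC}\AHI(Z)$, whereas we need a \emph{lower} bound on $\dim_{\bC}\AHI(L,i)$. To get around this, I would choose the simplifying crossing so that the oriented resolution either splits one component into two or merges two components in a way that forces the inductive hypothesis to guarantee a large contribution to $\AHI(L_0,i)$, after which the three-term exact sequence pushes the required rank onto $\AHI(L_\pm,i)$. The bookkeeping here is delicate, because the graded bound $2^k\binom{l}{(l+i)/2}$ is strictly finer than the total rank bound $2^{k+l}$, so the f-grading shifts in the triangle must line up exactly with the binomial structure inherited from the K\"unneth factorisation $\bC^{2^k}_{(0)}\otimes(\bC_{(1)}\oplus\bC_{(-1)})^{\otimes l}$.

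If the inductive route proves too awkward at mixed crossings, an alternative I would pursue is a cobordism construction: realize the ambient isotopy from the standard model to $L$ (up to crossing changes) as a genus-zero cobordism inside $(S^1\times D^2)\times[0,1]$ and show that the induced map on $\AHI$ is graded-injective, perhaps via a pairing argument against a dual cobordism combined with the nonvanishing results of Theorem \ref{Theorem-2g+n}. This would bypass the combinatorial induction at the cost of needing precise control of the cobordism maps in each f-grading.
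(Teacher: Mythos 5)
Your proposal does not close. The central difficulty you yourself flag --- that the unoriented skein exact triangle only yields subadditive \emph{upper} bounds --- is fatal to the induction as described. From a three-term exact triangle one can extract the lower bound $\dim B\ge|\dim A-\dim C|$, but exploiting it would require sharp upper bounds on one resolution simultaneously with lower bounds on the other, while your inductive hypothesis supplies only lower bounds; worse, the two resolutions at a crossing generally leave the class of links with winding numbers in $\{0,\pm1\}$ (they change the number of components and the winding numbers), so the inductive hypothesis need not even apply to them. The proposed fix of ``pushing the required rank onto $\AHI(L_\pm,i)$'' has no mechanism behind it. The cobordism alternative fares no better: a crossing change is not realized by an embedded genus-zero cobordism in $(S^1\times D^2)\times[0,1]$, and there is no reason for the induced maps on $\AHI$ to be injective in each f-grading.

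The actual argument (in \cite{XZ:forest}, whose Proposition 4.3 this paper merely cites) avoids exact triangles entirely. Each f-graded piece $\AHI(L,i)$ carries a homological $\bZ/2$-grading, and by the Euler characteristic computation in \cite{AHI}, Section 4.4, the Euler characteristic $\chi\big(\AHI(L,i)\big)$ depends only on the homotopy class of $L$ in $S^1\times D^2$; this is exactly the statement whose mod-$2$ reduction appears as Proposition \ref{prop_AHI_parity}. Every link with $k$ components of winding number $0$ and $l$ components of winding number $\pm1$ is homotopic in the solid torus to $\mathcal{U}_k\sqcup\mathcal{K}_l$, and $\AHI(\mathcal{U}_k\sqcup\mathcal{K}_l,i)$ is supported in a single homological degree, so
\begin{equation*}
\dim_{\bC}\AHI(L,i)\ \ge\ \big|\chi\big(\AHI(L,i)\big)\big| \ =\ \big|\chi\big(\AHI(\mathcal{U}_k\sqcup\mathcal{K}_l,i)\big)\big| \ =\ \dim_{\bC}\AHI(\mathcal{U}_k\sqcup\mathcal{K}_l,i).
\end{equation*}
The idea missing from your proposal is thus the combination of a homotopy-invariant Euler characteristic with the elementary inequality $\dim\ge|\chi|$, rather than any exact-triangle bookkeeping.
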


\begin{Proposition}[{\cite[Section 4.4]{AHI}}]\label{prop_AHI_parity}
Suppose $L_1, L_2$ are two links in $S^1\times D^2$. If $L_1$ and $L_2$ are \emph{homotopic} to each other in $S^1\times D^2$, then
$$
\dim_\bC\AHI(L_1,i)\equiv\dim_\bC\AHI(L_2,i) ~\mod~2
$$
for all $i\in \bZ$.
\end{Proposition}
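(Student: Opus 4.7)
The plan is to reduce the statement to a parity property of an unoriented skein exact triangle. Two links in $S^1\times D^2$ that are homotopic as continuous maps can always be connected by a finite sequence of ambient isotopies and crossing changes. Since $\AHI$ is an isotopy invariant, it suffices to show that for every crossing of $L$, the parity of $\dim_\bC\AHI(L,i)$ in each f-grading $i$ is unchanged when the crossing is flipped from $L_+$ to $L_-$.

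I would then invoke an unoriented skein exact triangle for $\AHI$ at the crossing, analogous to Kronheimer--Mrowka's triangle for singular instanton Floer homology \cite{KM:Kh-unknot}. Explicitly, this is a cyclic exact sequence
\begin{equation*}
\cdots\to\AHI(L)\to\AHI(L_0)\to\AHI(L_\infty)\to\AHI(L)\to\cdots,
\end{equation*}
where $L_0$ and $L_\infty$ are the two planar resolutions of the crossing. The triangle should arise from the saddle cobordism implementing the resolution, with the construction going through in the annular setting because the saddle can be arranged inside a ball away from the core $S^1\times\{0\}$, so that the ambient $S^1$-symmetry governing the f-grading is preserved.

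A direct bookkeeping of ranks shows that any exact triangle of finite-dimensional vector spaces $A\to B\to C\to A$ satisfies $\dim A+\dim B+\dim C\equiv 0\pmod 2$, since if $r_f,r_g,r_h$ are the ranks of the three maps then $\dim A=r_h+r_f$, $\dim B=r_f+r_g$, $\dim C=r_g+r_h$. Applied to the triangle above in each f-grading $i$ (assuming the connecting maps preserve or shift the f-grading by fixed amounts $a,b$), this yields
\begin{equation*}
\dim\AHI(L,i)+\dim\AHI(L_0,i+a)+\dim\AHI(L_\infty,i+b)\equiv 0\pmod 2.
\end{equation*}
Because $L_+$ and $L_-$ share the same pair of resolutions $L_0,L_\infty$ and the shifts depend only on the unoriented local picture of the crossing, the parity $\dim\AHI(L_\pm,i)\bmod 2$ is independent of the sign, completing the induction on crossings.

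The main obstacle is verifying that the connecting maps in the skein triangle preserve the f-grading up to shifts that depend only on the unoriented crossing rather than on its sign. This reduces to a spectral-flow computation for the singular anti-self-dual operator on the saddle cobordism in $[0,1]\times S^1\times D^2$, carried out relative to the absolute $\bZ$-grading conventions used in \cite{AHI} to define the f-grading.
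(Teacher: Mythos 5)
This proposition is imported from \cite[Section 4.4]{AHI} and the present paper gives no proof of it, but your argument --- reduce a generic homotopy to a sequence of isotopies and crossing changes, then apply the unoriented skein exact triangle, whose three terms have even total dimension in each f-degree and whose two resolved terms $L_0,L_\infty$ are shared by $L_+$ and $L_-$ --- is exactly the argument of the cited source, so the proposal is correct and takes the same route. The one inaccuracy is in your final paragraph: the f-grading is defined by the generalized eigenspace decomposition of the $\mu$-operator of the annulus (not by an absolute $\bZ$-grading), so the compatibility of the skein maps with the f-grading follows from the cobordisms being products near that surface and hence commuting with the $\mu$-action, rather than from a spectral-flow computation.
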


\section{Proof of Theorem \ref{thm_L4a1}}

Let $l\ge 2$ be an integer, recall that the $l$--strand braid group $B_l$ has the following 
presentation:
\begin{equation*}
B_l=\langle \sigma_1,\cdots,\sigma_{l-1}|\sigma_i\sigma_{i+1}\sigma_i=  \sigma_{i+1}\sigma_{i}\sigma_{i+1},~
      \sigma_i\sigma_j=\sigma_j\sigma_i~(j-i\ge 2)~           \rangle
\end{equation*}
The reduced Burau representation (see \cite{Birman}) is a group homomorphism
\begin{equation*}
\rho:B_l\to GL(l-1,\mathbb{Z}[t,t^{-1}])
\end{equation*}
which maps $\sigma_i$ to 
\begin{equation*}
\begin{pmatrix}
I_{i-2} &   &    &   &\\
        & 1  &  0  & 0  &\\
        & t  & -t  & 1  & \\
        &  0 &  0  &  1 & \\
        &   &    &   & I_{l-i-2}
\end{pmatrix},
\end{equation*}
where the matrix is truncated appropriately when $i=1~\text{or}~l-1$. Notice that 
$$
\det(\rho(\sigma_i))=-t \text{ for all } i,
$$
hence $\det(\rho(\beta))=\pm t^a$ for all $\beta\in B_l$.

\begin{Definition}
	Suppose $\beta\in B_l$ is  a braid, let $U\subset S^3$ be an unknot, let $\hat \beta\subset S^1\times D^2\cong S^3-N(U)$ be the braid closure of $\beta$. Define $U\cup \hat\beta$ to be the union of $U$ and $\hat \beta$ under the standard framing of $S^3-N(U)$.
\end{Definition}
\begin{remark}
	The link L4a1 is isotopic to $U\cup \hat{\sigma}_1$, where $\sigma_1$ is a generator of $B_2$.
\end{remark}

\begin{Theorem}[{\cite[Theorem 3]{Mor-braid}}]\label{Morton-Alexander-braid}
Let $\beta\in B_l$, and let $L=U\cup \widehat{\beta}$. 
Suppose $\hat \beta$ is connected, then the multi-variable Alexander polynomial $\Delta_L(x,t)$ of $L$ is
given by
\begin{equation}
\label{eqn_morton_alexander}
\Delta(x,t)\doteq\det\big(xI-\rho(\beta)(t)\big),
\end{equation}
where $x$ and $t$ are the variables corresponding to $U$ and $\hat{\beta}$ respectively, and the sign ``$\doteq$'' means that the two sides are equal up to a multiplication by $\pm x^a t^b$. 
\end{Theorem}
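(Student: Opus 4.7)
The plan is to compute $\Delta_L(x,t)$ directly via Fox calculus applied to a braid-adapted presentation of $\pi_1(S^3 \setminus L)$, and then to identify the resulting determinant with the characteristic polynomial of the reduced Burau representation.

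First, I would give a presentation of $\pi_1(S^3 \setminus L)$ reflecting the braid structure. Since $U$ is unknotted, $S^3 \setminus N(U) \cong S^1 \times D^2$ is a solid torus in which $\hat\beta$ sits as a braid closure. Cutting along a fiber disk exhibits $S^3 \setminus L$ as the mapping torus of $\beta$ acting on a punctured disk, yielding the presentation
\begin{equation*}
\pi_1(S^3 \setminus L) = \langle x_1, \ldots, x_l, y \mid y x_i y^{-1} = \beta_*(x_i),\ i=1,\ldots,l \rangle,
\end{equation*}
where $y$ is a meridian of $U$, $x_1, \ldots, x_l$ are meridians of the strands of $\hat\beta$ in the fiber disk, and $\beta_* \in \mathrm{Aut}(F_l)$ is the Artin action. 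Since $\hat\beta$ is connected, the meridians $x_i$ are all conjugate in $\pi_1$, so abelianization sends every $x_i \mapsto t$ and $y \mapsto x$.

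Second, I would run Fox calculus on the relators $r_i = y x_i y^{-1}\beta_*(x_i)^{-1}$. A short computation gives the $l \times (l+1)$ abelianized Alexander matrix
\begin{equation*}
A = \bigl[\, xI - \tilde B(\beta)(t) \,\bigm|\, (1-t)\mathbf{1}\,\bigr],
\end{equation*}
where $\mathbf{1} = (1,\ldots,1)^T$ and $\tilde B(\beta)_{ij} = \overline{\partial \beta_*(x_i)/\partial x_j}\big|_{x_k\mapsto t}$ is the unreduced Burau matrix. Fox's fundamental identity gives two key relations: the row identity $\tilde B(\beta)\mathbf{1} = \mathbf{1}$ (so $\mathbf{1}$ spans a $\tilde B$-invariant line), and the column identity $(x-1)C_y + (t-1)\sum_j C_{x_j} = 0$ among the columns of $A$. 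Checking on the generators $\sigma_i$, the quotient representation on $\bZ[t^{\pm 1}]^l/\langle\mathbf{1}\rangle$ is conjugate to the reduced Burau representation $\rho$ of the excerpt, so
\begin{equation*}
\det\bigl(xI - \tilde B(\beta)(t)\bigr) = (x-1)\det\bigl(xI - \rho(\beta)(t)\bigr).
\end{equation*}

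Finally, I would assemble $\Delta_L$ from the $l \times l$ minors of $A$. Deleting the $y$-column gives $M_y = (x-1)\det(xI-\rho(\beta)(t))$, while the column identity above (applied as a column operation that multiplies $C_y$ by $x-1$ and then subtracts multiples of the remaining $C_{x_j}$'s) gives $M_{x_k} \doteq (t-1)\det(xI-\rho(\beta)(t))$ for each $k$. Hence the first elementary ideal satisfies $E_0 = (x-1,\,t-1)\cdot\det(xI-\rho(\beta)(t))$, and since $\gcd(x-1,t-1) = 1$ in $\bZ[x^{\pm 1},t^{\pm 1}]$, we obtain $\Delta_L(x,t) \doteq \gcd(E_0) \doteq \det(xI - \rho(\beta)(t))$, as claimed. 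The main obstacle is the bookkeeping step matching the quotient $\tilde B/\langle\mathbf{1}\rangle$ with the specific reduced Burau $\rho$ of the excerpt: equality of characteristic polynomials is automatic, but pinning down the representations themselves requires an explicit change of basis together with a direct check on the generators $\sigma_i$.
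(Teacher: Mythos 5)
The paper offers no proof of this statement: it is quoted verbatim from Morton \cite[Theorem 3]{Mor-braid}, so there is no internal argument to compare against. Your Fox-calculus proof is essentially the standard (and essentially Morton's) one, and its outline is correct: the mapping-torus presentation of $\pi_1(S^3\setminus L)$ with $y$ a meridian of the axis, the identification of the abelianized Fox Jacobian with the unreduced Burau matrix, the two fundamental identities (row sums giving the invariant vector $\mathbf{1}$, and the column relation $(x-1)C_y+(t-1)\sum_j C_{x_j}=0$), and the passage from the ideal of maximal minors $\det\bigl(xI-\rho(\beta)(t)\bigr)\cdot(x-1,\,t-1)$ to $\Delta_L\doteq\det\bigl(xI-\rho(\beta)(t)\bigr)$ using $\gcd(x-1,t-1)=1$ all check out; note also that connectedness of $\hat\beta$ is used exactly where you use it, to send every $x_i$ to the same variable $t$. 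One caution about the step you defer as bookkeeping: identifying the quotient of the unreduced Burau action by $\langle\mathbf{1}\rangle$ with the specific $\rho$ of the statement is genuinely convention-sensitive, not merely cosmetic. The Fox Jacobian of the Artin action is an anti-homomorphism, and the choices of conjugation direction ($yx_iy^{-1}$ versus $y^{-1}x_iy$) and of the handedness of $\sigma_i$ can replace $\det\bigl(xI-\rho(\beta)(t)\bigr)$ by $\det\bigl(xI-\rho(\beta)(t^{-1})\bigr)$ or $\det\bigl(x^{-1}I-\rho(\beta)(t)\bigr)$; already for $\beta=\sigma_1\in B_2$ this is the difference between $x+t$ and $xt+1$, which are \emph{not} equal up to $\pm x^at^b$. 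So the ``direct check on the generators'' you mention is load-bearing and must be carried out with a fixed orientation convention to land on the stated formula. For the way the theorem is used in this paper, only the coarse shape of the right-hand side matters (monic of degree $l-1$ in $x$ with constant term $\pm\det\rho(\beta)=\pm t^a$), and that part of your argument is already complete.
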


\begin{remark}
	The ambiguity in the notation ``$\doteq$'' is necessary because
 the multi-variable Alexander polynomial (before normalization) is only well-defined up to a multiplication by $\pm x^a t^b$.
\end{remark}

We also need the following result:
 \begin{Theorem}[{\cite{Torres}}]\label{Torres-multi-ALexander}
Suppose $L=K_1\cup K_2$ is a 2-component link, and let $\Delta_L(x,y)$ be the multi-variable Alexander polynomial of $L$ where $x$ and $y$ are the variables corresponding to $K_1$ and $K_2$ respectively. Then
we have
\begin{equation*}
	\Delta_L(x,1) \doteq \frac{1-x^l}{1-x} \Delta_{K_1}(x),
\end{equation*}
where $\Delta_{K_1}(x)$ is the Alexander polynomial of $K_1$, and $l=|\lk(K_1,K_2)|$ is the absolute value of the linking number of $K_1$ and $K_2$.
\end{Theorem}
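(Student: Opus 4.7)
The plan is to compute via the covering--space interpretation of the Alexander polynomial, matching the $y=1$ specialization of the Alexander module of $L$ with the Alexander module of $K_1$ up to a contribution that records how $K_2$ sits inside the infinite cyclic cover of $S^3\setminus K_1$.

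Set $X_L=S^3\setminus\nu(L)$, $X_1=S^3\setminus\nu(K_1)$, and $\Lambda=\bZ[x^{\pm 1}]$. Let $p\colon Y\to X_1$ be the infinite cyclic cover. The cover of $X_L$ associated to the homomorphism $\pi_1(X_L)\to\bZ$ sending the meridian of $K_1$ to $1$ and the meridian of $K_2$ to $0$ can be identified with $\widehat{X_L}:=Y\setminus p^{-1}(\nu(K_2))$, and under this identification $\Delta_L(x,1)$ becomes, up to a unit, a generator of the order ideal of $H_1(\widehat{X_L};\bZ)$ viewed as a $\Lambda$--module. So the task is to compare this $\Lambda$--module with the Alexander module of $K_1$, namely $H_1(Y;\bZ)$, whose order is $\Delta_{K_1}(x)$.

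First I would compute the $\Lambda$--module structure of $p^{-1}(\nu(K_2))$ and of its boundary. Since $[K_2]=l\in H_1(X_1)=\bZ$, the monodromy of $p$ along $K_2$ translates the fiber $\bZ$ by $l$, so when $l\neq 0$ the preimage $p^{-1}(\nu(K_2))$ consists of $|l|$ copies of $\bR\times D^2$ cyclically permuted by the deck action, giving $H_0(p^{-1}(\nu(K_2)))\cong\Lambda/(1-x^l)$, $H_1(p^{-1}(\nu(K_2)))=0$, and both $H_0,H_1$ of $\partial p^{-1}(\nu(K_2))$ isomorphic to $\Lambda/(1-x^l)$ (the $H_1$ being generated by meridians of $K_2$, which lift to loops in $Y$ because they are null-homologous in $X_1$).

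Next I would feed these data into the Mayer--Vietoris sequence for $Y=\widehat{X_L}\cup p^{-1}(\nu(K_2))$. Combining the classical identity $\mathrm{ord}_{\Lambda}H_1(Y;\bZ)\doteq\Delta_{K_1}(x)$ with the multiplicativity of order ideals along exact sequences of torsion $\Lambda$--modules, the $\Lambda/(1-x^l)$ factors from the boundary collapse against the $\Lambda/(1-x)=H_0(Y)$ piece to produce exactly the factor $(1-x^l)/(1-x)$ in front of $\Delta_{K_1}(x)$. The case $l=0$ is handled separately: $p^{-1}(K_2)$ then has countably many components which inject a free $\Lambda$--summand into $H_1(\widehat{X_L})$, forcing $\Delta_L(x,1)\doteq 0$ and matching $(1-x^0)/(1-x)=0$.

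The main obstacle is the multiplicativity of order ideals when some module appearing in the long exact sequence fails to be torsion. In practice I would replace order ideals by characteristic polynomials of presentation matrices extracted from a Wirtinger presentation via Fox calculus, so that all cancellations happen at the level of determinants, or alternatively localize at the generic point of $\Lambda$ to reduce to a clean rank/order bookkeeping; either path delivers the stated identity.
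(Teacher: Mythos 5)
This is Torres's classical theorem, which the paper cites rather than proves, so there is no in-paper argument to compare against; I will assess your sketch on its own terms. The covering-space strategy is a legitimate and standard route to the Torres condition, and most of your local computations are right: for $l\neq 0$ the preimage $p^{-1}(\nu(K_2))$ is indeed $|l|$ copies of $\bR\times D^2$ permuted cyclically, its boundary contributes $\Lambda/(1-x^l)$ in degrees $0$ and $1$, $H_2(Y)=0$ for the infinite cyclic cover of an unknotted-or-not knot complement, and the Mayer--Vietoris alternating product then yields $\mathrm{ord}_{\Lambda}H_1(\widehat{X_L})\doteq(1-x^l)\,\Delta_{K_1}(x)$.

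The genuine gap is your opening identification: $\Delta_L(x,1)$ is \emph{not} a generator of the order ideal of $H_1(\widehat{X_L})$; the two differ by exactly the factor $(x-1)$ that is supposed to appear in the denominator of the formula. The Hopf link already shows this: there $\Delta_L(x,y)\doteq 1$, so $\Delta_L(x,1)\doteq 1$, while $\widehat{X_L}$ is the $(1,0)$-cover of $T^2\times I$, whose first homology is $\bZ$ generated by the meridian of $K_2$ with trivial deck action, i.e.\ $\Lambda/(x-1)$, of order $x-1$. The correct relation, $\mathrm{ord}_{\Lambda}H_1(\widehat{X_L})\doteq(x-1)\,\Delta_L(x,1)$, comes from the Wang sequence for the residual $\bZ$-cover $\tilde X_L\to\widehat{X_L}$, namely $0\to H_1(\tilde X_L)/(y-1)\to H_1(\widehat{X_L})\to H_0(\tilde X_L)\to 0$ with $H_0(\tilde X_L)\cong\Lambda/(x-1)$; this step is where the denominator $1-x$ actually originates. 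Your proposed alternative source for it --- cancellation of the boundary terms against $H_0(Y)\cong\Lambda/(1-x)$ in Mayer--Vietoris --- does not occur, because $H_0(\widehat{X_L})\cong\Lambda/(x-1)$ as well and these two terms cancel each other in the alternating product, leaving no net $(1-x)$ in the denominator. As written, your bookkeeping would therefore output $\Delta_L(x,1)\doteq(1-x^l)\Delta_{K_1}(x)$, which is wrong (again the Hopf link: it would give $1\doteq 1-x$). Once the Wang-sequence correction is inserted, the argument closes up, modulo the torsion/nonvanishing caveats you already flag in your last paragraph (and which can indeed be handled by Fox calculus, which is essentially Torres's original method).
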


The next lemma is an immediate corollary of the results in \cite{Dowlin,OS:HFL}, and is essentially contained in the proof of \cite[Lemma 6.1]{XZ:forest}. We state it here as a separate lemma for future reference.

\begin{Lemma}
\label{lem_rank_of_Kh_and_Alexander}
	Suppose $L$ is a link with $n$ components, let $\Delta_L(x_1,\cdots,x_n)$ be the (multi-variable) Alexander polynomial of $L$. Let $p\in L$ be a base point.
	\begin{enumerate}
		\item If $n=1$, then the sum of the absolute values of the coefficients of 
		 $\Delta_L(x_1)$
		 is less than or equal to $\,\rank_{\bQ}\Khr(L,p;\bQ)$.
		\item If $n\ge 2$, then the sum of the absolute values of the coefficients of 
		 $$(x_1-1)\cdots (x_n-1)\Delta_L(x_1,\cdots,x_n)$$ 
		 is less than or equal to  $2^{n-1}\rank_{\bQ}\Khr(L,p;\bQ)$.	
	\end{enumerate} 
\end{Lemma}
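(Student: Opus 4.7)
The plan is to combine two ingredients. The first is the Ozsv\'ath--Szab\'o categorification of the Alexander polynomial by link Floer homology~\cite{OS:HFL}: for a knot $K$, the graded Euler characteristic satisfies
\[
\chi\bigl(\HFK(K;\bQ)\bigr)\doteq \Delta_K(x_1),
\]
while for a link $L$ with $n\ge 2$ components,
\[
\chi\bigl(\HFL(L;\bQ)\bigr)\doteq \prod_{i=1}^n\bigl(x_i^{1/2}-x_i^{-1/2}\bigr)\,\Delta_L(x_1,\ldots,x_n).
\]
The second ingredient is Dowlin's spectral sequence~\cite{Dowlin}, which over $\bQ$ yields the rank inequality $\rank_{\bQ}\HFK(L,p;\bQ)\le \rank_{\bQ}\Khr(L,p;\bQ)$.

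For the knot case ($n=1$), I would argue as follows. The coefficient of each monomial in $\chi(\HFK(L;\bQ))$ is an alternating sum of ranks over the corresponding graded piece, so in absolute value it is bounded by the total rank of that piece. Summing over all monomials gives that the sum of the absolute values of the coefficients of $\Delta_L(x_1)$ is at most $\rank_{\bQ}\HFK(L;\bQ)$. Dowlin's inequality then gives the bound by $\rank_{\bQ}\Khr(L,p;\bQ)$.

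For the link case ($n\ge 2$), the same Euler characteristic argument bounds the sum of absolute values of the coefficients of $\chi(\HFL(L;\bQ))$ by $\rank_{\bQ}\HFL(L;\bQ)$. Writing $x_i^{1/2}-x_i^{-1/2}=x_i^{-1/2}(x_i-1)$ and using that multiplication by a monomial $\pm x^{a}$ preserves the sum of absolute values of the coefficients, the sum for $\prod_i(x_i-1)\Delta_L$ equals that for $\chi(\HFL(L;\bQ))$. Finally, using the standard identification $\HFL(L;\bQ)\cong \HFK(L;\bQ)\otimes V^{\otimes(n-1)}$ with $\dim_{\bQ}V=2$, together with Dowlin's inequality, I would conclude
\[
\rank_{\bQ}\HFL(L;\bQ)=2^{n-1}\rank_{\bQ}\HFK(L;\bQ)\le 2^{n-1}\rank_{\bQ}\Khr(L,p;\bQ),
\]
which gives the claimed bound.

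The argument is essentially an isolation of the computation carried out in the proof of \cite[Lemma 6.1]{XZ:forest}, so the main obstacle is not conceptual but bookkeeping: one must take care that Dowlin's spectral sequence is applied with $\bQ$ coefficients, that the Ozsv\'ath--Szab\'o formula is used with the correct normalization so that the monomial factor $\prod x_i^{-1/2}$ can be discarded, and that the passage from multi-graded link Floer homology $\HFL$ to the collapsed $\HFK$ via a tensor factor $V^{\otimes(n-1)}$ is invoked correctly for links with $n\ge 2$ components.
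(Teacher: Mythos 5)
Your proposal follows essentially the same route as the paper: bound the sum of absolute values of the coefficients of the (multi-graded) Euler characteristic by the total rank of $\HFL$, identify that Euler characteristic with $\prod_i(x_i-1)\Delta_L$ up to a unit via \cite[Equation (1)]{OS:HFL}, and then pass to $\Khr$ via Dowlin. The outline is sound, but two of your intermediate statements are misstated in a way that happens to cancel. First, Dowlin's \cite[Corollary 1.7]{Dowlin} for an $n$-component link reads $\rank_\bQ\HFK(L;\bQ)\le 2^{n-1}\rank_\bQ\Khr(L;\bQ)$ (the $E_1$ page is $\Khr(L)\otimes V^{\otimes(n-1)}$), not $\rank_\bQ\HFK(L;\bQ)\le\rank_\bQ\Khr(L;\bQ)$; the unadorned inequality already fails for the Hopf link, where $\rank\HFK=4$ and $\rank\Khr=2$. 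Second, the relation you invoke, $\HFL(L;\bQ)\cong\HFK(L;\bQ)\otimes V^{\otimes(n-1)}$, is not the standard comparison: with the multi-pointed conventions used by Dowlin and by the paper, $\HFK(L)$ and $\HFL(L)$ are the same group with different gradings, so $\rank_\bQ\HFK(L;\bQ)=\rank_\bQ\HFL(L;\bQ)$ with no tensor factor (again check the Hopf link: both ranks are $4$). Your two errors compensate, so the composite inequality $\rank_\bQ\HFL(L;\bQ)\le 2^{n-1}\rank_\bQ\Khr(L;\bQ)$ that you actually need is correct; but as written neither intermediate step is justified by the cited sources. The fix is simply to use $\rank\HFK=\rank\HFL$ together with the correctly quoted Corollary 1.7, which is exactly what the paper does. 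Your handling of the $\prod_i x_i^{-1/2}$ normalization and of the knot case is fine.
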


\begin{proof}
We use $\HFK$ and $\HFL$ to denote 
the Heegaard knot Floer homology \cite{OS:HFK,Ras:HFK} and the link Floer homology \cite{OS:HFL} respectively. 
The link Floer homology was originally defined for ${\mathbb{Z}/2}$-coefficients, and was generalized to $\bZ$-coefficients in \cite{Sar-HFL}.
It is known that 
\begin{equation}\label{eqn_rank_HFK=HFL}
	\rank_{\bQ} \HFK(L;\bQ)=\rank_{\bQ}\HFL(L;\bQ),
\end{equation}
but $\HFL(L;\bQ)$ carries more refined gradings.

By \cite[Corollary 1.7]{Dowlin}, we have
\begin{equation}\label{eq_HFK<Kh}
\rank_\bQ \HFK(L;\bQ)\le 2^{n-1}\rank_\bQ \Khr(L;\bQ).
\end{equation}
By \cite[Equation (1)]{OS:HFL}, the multi-graded Euler characteristics of $\HFL(L;\bQ)$ satisfy 
\begin{equation}
\label{eqn_Euler_characteristic_HFL}
	\chi\big(\HFL(L;\bQ)\big)\doteq 
\begin{cases}
	\Delta_L(x_1) &\text{ if }n=1, \\
	(x_1-1)\cdots (x_n-1)\Delta_L(x_1,\cdots,x_n) &\text{ if }n\ge 2,
\end{cases}
\end{equation}
hence the result is proved.
\end{proof}

Now let $l\ge 2$ be an integer, let $\beta\in B_l$, $L=U\cup \hat \beta$, and let $\Delta_L(x,y)$ be the multi-variable Alexander polynomial of $L$ such that $x$ and $y$ are the variables corresponding to $U$ and $\hat \beta$ respectively.
By \eqref{eqn_morton_alexander}, we have
    \begin{align}
    \Delta_L(x,y)&\doteq (-1)^{l-1}\det(\rho(\beta_2)(y))+ f_1(y)x+\cdots+f_{l-2}(y)x^{l-2}+ x^{l-1}  
    \nonumber
    \\ 
    &= \pm y^a + f_1(y)x+\cdots+f_{l-2}(y)x^{l-2}+ x^{l-1}  
                 \label{eq-Delta-xy}
    \end{align} 
 for some $a\in \mathbb{Z},f_i\in \mathbb{Z}[y,y^{-1}]$.
 By Theorem \ref{Torres-multi-ALexander},
 \begin{equation*}
 	 \Delta_L(x,1)\doteq (1+x+x^2+\cdots+x^{l-1})\Delta_{U}(x)=1+x+x^2+\cdots+x^{l-1}.
 \end{equation*}
Therefore in Equation \eqref{eq-Delta-xy}, we have $f_i(1)=1$ for all $i$, and the sign in front of the term $y^a$ is positive.

A 2-component link $K_1\cup K_2$ is called
 \emph{exchangeably braided}, if both $K_1, K_2$ are unknots, and for each $(i,j)\in \{(1,2),(2,1)\}$, the knot $K_i$ is a braid closure with axis $K_j$. The concept of exchangeably braided links
  was introduced and studied by Morton in \cite{Mor-braid}.
If we further assume that $L$ is exchangeably braided, then by symmetry and \eqref{eq-Delta-xy}, we have
\begin{equation}\label{eq-Delta-yx}
 \Delta_L(x,y)\doteq  x^b + g_1(x)y+\cdots+g_{l-2}(x)y^{l-2}+ y^{l-1}
 \end{equation}
 for some $b\in \mathbb{Z}$ and $g_i(x)\in \mathbb{Z}[x,x^{-1}]$ with $g_i(1)=1$.

\begin{Lemma}\label{Lemma_Delta_xt_lower_bound}
Let $L$ be a mutually braided link with linking number $l\ge 3$,
 let $\Delta_L(x,y)$ be the multi-variable Alexander polynomial of $L$. Then the sum of the absolute values of the coefficients of $(x-1)(y-1)\Delta_L(x,y)$ is at least $12$.
\end{Lemma}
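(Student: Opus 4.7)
The plan is to exploit the exchangeable symmetry to put $\Delta_L$ into a symmetric polynomial representative, and then to bound $\|(x-1)(y-1)\Delta_L\|_1$ by a row-wise discrete total-variation argument. Combining the two Morton-type expansions \eqref{eq-Delta-xy} and \eqref{eq-Delta-yx} with Torres's condition $\Delta_L(x,1)=1+x+\cdots+x^{l-1}$, I would argue that after multiplying by a suitable $\pm x^cy^d$ one obtains a polynomial representative
$$
\Delta_L(x,y)=x^{l-1}+y^{l-1}+\sum_{1\le i,j\le l-2}M_{ij}\,x^iy^j,
$$
in which $M=(M_{ij})$ is a symmetric $(l-2)\times(l-2)$ integer matrix with every row sum (and hence every column sum) equal to $1$. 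The symmetry of $M$ comes from $\Delta_L(x,y)\doteq\Delta_L(y,x)$, the row-sum condition from $f_i(1)=1$ in \eqref{eq-Delta-xy}, and matching leading terms in the two expansions forces the exponent $a$ to equal $l-1$ and fixes the $\doteq$ ambiguity.

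Next, let $C_{ij}$ denote the coefficient of $x^iy^j$ in this representative (extended by $0$ outside $[0,l-1]^2$), and set $D_{I,j}:=C_{I-1,j}-C_{I,j}$. A direct expansion shows that the coefficient of $x^Iy^J$ in $(x-1)(y-1)\Delta_L$ equals $D_{I,J-1}-D_{I,J}$, so
$$
\|(x-1)(y-1)\Delta_L\|_1=\sum_{I\in\mathbb{Z}}\mathrm{TV}\bigl((D_{I,j})_{j\in\mathbb{Z}}\bigr),
$$
where $\mathrm{TV}$ denotes the discrete total variation of a finitely supported integer sequence.

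The key step is to bound the four boundary rows $I\in\{0,1,l-1,l\}$. For $I\in\{0,l\}$ the sequence $D_{I,\cdot}$ has a single nonzero entry equal to $\pm 1$, so $\mathrm{TV}=2$. For $I=1$, the normalization yields $D_{1,l-1}=C_{0,l-1}-C_{1,l-1}=1-0=1$ while $\sum_jD_{1,j}=1-1=0$; any nonzero, finitely supported integer sequence summing to $0$ with $\max\ge 1$ must also satisfy $\min\le -1$, so $\mathrm{TV}\ge 2(\max-\min)\ge 4$. The case $I=l-1$ is symmetric, using $D_{l-1,0}=-1$. Since the interior rows $I\in\{2,\dots,l-2\}$ (empty when $l=3$) contribute nonnegatively,
$$
\|(x-1)(y-1)\Delta_L\|_1\ge 2+4+4+2=12.
$$

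The principal subtlety is the normalization in the first paragraph: producing a polynomial representative in which both $x^{l-1}$ and $y^{l-1}$ appear with coefficient $1$ and the interior coefficient matrix is genuinely symmetric requires a careful joint use of Morton's theorem and the exchangeable symmetry, since the two expansions \eqref{eq-Delta-xy} and \eqref{eq-Delta-yx} are a priori equal only up to a monomial factor $\pm x^cy^d$. Once that normalization is fixed, the remaining steps are straightforward, and the bound $12$ is attained already at $l=3$, where $\Delta_L=x^2+xy+y^2$.
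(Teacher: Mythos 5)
Your total-variation bookkeeping is sound: the identity $\|(x-1)(y-1)\Delta_L\|_1=\sum_I \mathrm{TV}\bigl((D_{I,j})_j\bigr)$ and the observation that a nonzero, finitely supported integer row summing to $0$ has $\mathrm{TV}\ge 4$ are exactly the mechanism of the paper's proof (there phrased as: $(y-1)\bigl(f_{i-1}(y)-f_i(y)\bigr)$ is a nonzero multiple of $(y-1)^2$, hence has $\ell^1$-norm at least $4$). The gap is the normal form on which your identification of the two ``good'' rows rests. First, exchangeable braiding does not provide an ambient isotopy swapping the two components, so $\Delta_L(x,y)\doteq\Delta_L(y,x)$ --- your source for the symmetry of $M$ --- is unjustified; what you actually have are the two expansions \eqref{eq-Delta-xy} and \eqref{eq-Delta-yx} of the \emph{same} polynomial. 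Second, and more seriously, those two expansions together with Torres's conditions do not force $a=l-1$, nor that each $f_i$ is supported in $y$-degrees $1,\dots,l-2$: they only force the extreme rows and columns of the coefficient array to be monomials. For $l=3$ the Laurent polynomial $1+(y+y^{-1}-1)x+x^2$ satisfies \eqref{eq-Delta-xy} (with $a=0$), \eqref{eq-Delta-yx} (multiply by $x^{-1}y$), the conditions $f_1(1)=g_1(1)=1$, the Torres evaluations, and $\Delta(x^{-1},y^{-1})\doteq\Delta(x,y)$, yet it has five terms and so cannot be carried to your three-term form $x^2+M_{11}xy+y^2$ by multiplication by $\pm x^cy^d$. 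So ``matching leading terms'' does not pin down the diagonal configuration.

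Without the normal form, the key equalities $D_{1,l-1}=1$ and $D_{l-1,0}=-1$ are unsupported: nothing a priori rules out $f_1=y^a$, in which case row $I=1$ does not jump at all and the two jumps occur at other rows. What is actually needed --- and what the paper proves --- is that at least two of the differences $f_{i-1}-f_i$ ($1\le i\le l-1$, with $f_0=y^a$, $f_{l-1}=1$) are nonzero. The paper gets this by checking that if zero or one of them were nonzero, then $\Delta_L\doteq y^a(1+x+\cdots+x^{i-1})+x^i+\cdots+x^{l-1}$, which is incompatible with the $y$-expansion \eqref{eq-Delta-yx} once $l\ge 3$. Each such (unlocated) jump row then contributes at least $4$ by your own TV estimate, and the rows $I=0,l$ contribute $2$ each, giving $12$. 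If you replace your boundary-row identification with this counting argument, your proof closes and essentially coincides with the paper's.
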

\begin{proof}
Let $f_i(y)$ be as in \eqref{eq-Delta-xy}, and set $f_0(y)=y^a$ and $f_{l-1}(y)=1$. Then we have 
\begin{align}
& (x-1)(y-1)\Delta_L(x,y)
\nonumber
\\
\doteq & (y-1)\Big(  -y^a+\big(y^a-f_1(y)\big)x+\cdots+ \big(f_{l-3}(y)-f_{l-2}(y)\big)x^{l-2} 
\nonumber
\\
& \qquad + \big(f_{l-2}(y)-1\big)x^{l-1}+x^l    
\Big) \nonumber 
\\
=& -(y-1)y^a+(y-1)x^l+\sum_{i=1}^{l-1}(y-1)\big(f_{i-1}(y)- f_{i}(y)\big)x^i.
\label{eqn_standard_form_alexander_mutually_braided}
\end{align}

We discuss three cases. If $f_{i-1}=f_i$ for all $i\in\{1,\cdots,l-1\}$, then $a=0$, and \eqref{eq-Delta-xy} gives 
$$\Delta_L(x,y)\doteq 1+x+\cdots +x^{l-1},$$
which contradicts \eqref{eq-Delta-yx} and the assumption that $l\ge 3$.

If there is exactly one element $i\in\{1,\cdots,l-1\}$ such that $f_{i-1}(y)\neq f_{i}(y)$, then by \eqref{eq-Delta-xy}, we have
$$
\Delta_L(x,y)\doteq y^a+y^a x +\cdots +y^a x^{i-1}+ x^{i}+\cdots +x^{l-1},
$$
which also contradicts \eqref{eq-Delta-yx} and the assumption that $l\ge 3$.

If there exist at least two elements $i\in \{1,\cdots,l-1\}$ such that $f_{i-1}(y)\neq f_{i}(y)$, notice that $f_{i-1}(1)-f_i(1)=1-1=0$, hence $(y-1)$ is a factor of $(f_{i-1}(y)-f_i(y))$. Therefore for such an $i$, the sum of the absolute values of the coefficients of
$$
(y-1)(f_{i-1}(y)- f_{i}(y))
$$
is even and strictly greater than 2, hence it is at least 4. The desired result then follows from \eqref{eqn_standard_form_alexander_mutually_braided}.
\end{proof}

The following lemma refines the proof above and obtains a necessary condition for attaining the lower bound in Lemma \ref{Lemma_Delta_xt_lower_bound}. This result will not be used in the proof of Theorem \ref{thm_L4a1}.
 
\begin{Lemma}
\label{Lemma_Delta_xt_lower_bound_sharp}
Suppose $L$ is an exchangeably braided link with linking number $l\ge 3$,
 let $\Delta_L(x,y)$ be the multi-variable Alexander polynomial of $L$. If the sum of the absolute values of the coefficients of $(x-1)(y-1)\Delta_L(x,y)$ is equal to $12$, then $l=3$.
\end{Lemma}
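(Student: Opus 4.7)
The plan is to extract additional constraints from the equality case of Lemma~\ref{Lemma_Delta_xt_lower_bound} by exploiting the $x\leftrightarrow y$ symmetry afforded by the exchangeable braid hypothesis. Starting from
\begin{equation*}
(x-1)(y-1)\Delta_L(x,y)\doteq -(y-1)y^a+(y-1)x^l+\sum_{i=1}^{l-1}(y-1)\bigl(f_{i-1}(y)-f_i(y)\bigr)x^i,
\end{equation*}
the first two summands contribute exactly $2$ each, and each nonzero middle summand contributes at least $4$, to the sum of absolute values of the coefficients. Equality to $12$ therefore forces exactly two indices $i_1<i_2$ in $\{1,\ldots,l-1\}$ with $f_{i_1-1}\neq f_{i_1}$ and $f_{i_2-1}\neq f_{i_2}$, yielding the decomposition
\begin{equation*}
\Delta_L(x,y)=y^a(1+x+\cdots+x^{i_1-1})+F_1(y)(x^{i_1}+\cdots+x^{i_2-1})+(x^{i_2}+\cdots+x^{l-1}),
\end{equation*}
where $F_1:=f_{i_1}=\cdots=f_{i_2-1}$ satisfies $F_1(1)=1$.

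The next step is to apply Theorem~\ref{Morton-Alexander-braid} with the roles of $U$ and $\hat\beta$ exchanged, which is permitted since $L$ is exchangeably braided. This gives a second normalized expansion of the same polynomial in powers of $y$ of the form $x^b+g_1(x)y+\cdots+y^{l-1}$ with $g_i(1)=1$. Combined with Torres' formula applied at both $y=1$ and $x=1$, the ambiguity $\pm x^cy^d$ can be pinned down to yield the literal identity $\Delta_L(x,y)=\Delta_L(y,x)$.

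With this symmetry in hand, a sequence of coefficient comparisons completes the argument. Comparing the coefficient of $x^0$ on both sides of $\Delta_L(x,y)=\Delta_L(y,x)$ rules out $a=0$ and forces $a=l-1$, $i_2=l-1$, and $F_1(0)=0$. Comparing the coefficient of $x^{l-1}$ then forces $i_1=1$ and the coefficient $c_{l-1}$ of $y^{l-1}$ in $F_1$ to vanish. The remaining identity reduces to
\begin{equation*}
F_1(y)(x+x^2+\cdots+x^{l-2})=F_1(x)(y+y^2+\cdots+y^{l-2}),
\end{equation*}
and writing $F_1(y)=\sum_{k=1}^{l-2}c_ky^k$ and comparing coefficients of $x^jy^k$ for $1\le j,k\le l-2$ yields $c_1=\cdots=c_{l-2}$. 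The normalization $F_1(1)=1$ then gives $(l-2)c_1=1$, and since $c_1\in\mathbb{Z}$ this forces $l-2\in\{\pm 1\}$, hence $l=3$.

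The main delicate point is establishing the literal symmetry $\Delta_L(x,y)=\Delta_L(y,x)$ rather than merely symmetry up to $\pm x^cy^d$; once this is in hand, the remaining computations are straightforward coefficient bookkeeping, followed by an integrality argument.
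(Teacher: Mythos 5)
Your argument hinges on the claim that the exchangeable braiding, together with Torres' formula, lets you pin down the unit ambiguity and obtain the literal identity $\Delta_L(x,y)=\Delta_L(y,x)$. This is the gap. Being exchangeably braided does \emph{not} mean the two components can be interchanged by an ambient isotopy; it only means that each component is a braid closure with the other as axis. Consequently what you actually get from applying Theorem \ref{Morton-Alexander-braid} twice are two \emph{structural expansions of the same polynomial}: one as a monic degree-$(l-1)$ polynomial in $x$ with coefficients in $\bZ[y,y^{-1}]$ and monomial constant term (from the braid $\beta$ with axis $K_1$), and one as a monic degree-$(l-1)$ polynomial in $y$ with monomial constant term (from a generally \emph{different} braid $\beta'$ with axis $K_2$). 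Nothing in these two expansions, nor in Torres' formula at $x=1$ and $y=1$ (which only constrains $\Delta_L(1,y)$ and $\Delta_L(x,1)$ and is automatically compatible with both components being unknots), forces $\det(xI-\rho(\beta)(y))\doteq\det(yI-\rho(\beta')(x))$ to become a genuine $x\leftrightarrow y$ symmetry. Since every subsequent coefficient comparison in your proof ($a=l-1$, $i_2=l-1$, $i_1=1$, $c_1=\cdots=c_{l-2}$, and the final integrality step) is extracted from this unproved identity, the argument does not go through as written.

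For comparison, the paper's proof uses only symmetries that are actually available: Torres' formula at $x=1$ forces the two exceptional indices to be adjacent ($k_2-k_1=1$), the genuine duality $\Delta_L(x,y)\doteq\Delta_L(x^{-1},y^{-1})$ (valid for every link) forces the single non-monomial $x$-coefficient to sit in the middle degree $m$ with $l=2m+1$, and then the \emph{argument} (not the polynomial) is rerun with the roles of $x$ and $y$ exchanged, using the second Morton expansion, to conclude that at most one power of $y$ can have a non-monomial coefficient, whence $m=1$. If you want to salvage your approach, you would need either to prove that exchangeably braided links in this equality case are interchangeable (which is not known and is stronger than what the lemma requires), or to replace the symmetry $\Delta_L(x,y)=\Delta_L(y,x)$ by the duality $\Delta_L(x,y)\doteq\Delta_L(x^{-1},y^{-1})$ together with the two one-variable-monic expansions, which is essentially the paper's route.
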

\begin{proof}
We use the same notation as in the proof of Lemma \ref{Lemma_Delta_xt_lower_bound}. 
If the sum of the absolute values of the coefficients of $(x-1)(y-1)\Delta_L(x,y)$ is equal to $12$, then the proof of  Lemma \ref{Lemma_Delta_xt_lower_bound} indicates that there are exactly two elements $i\in\{1,\cdots,l-1\}$ 
such that $f_{i-1}(y)\neq f_i(y)$. Therefore by \eqref{eq-Delta-xy}, there exists $f(y)\in \bZ[y,y^{-1}]$ and $0\le k_1<k_2\le l-2$, such that $f(y)\neq 1$, $f(y)\neq y^a$, and 
$$
\Delta_L(x,y)\doteq y^a(1+\cdots +x^{k_1})+f(y)(x^{k_1+1}+\cdots + x^{k_2})+x^{k_2+1}+\cdots +x^{l-1},
$$
hence 
$$
\Delta_L(1,y)\doteq (1+k_1)y^a+(k_2-k_1)f(y)+(l-1-k_2).
$$
On the other hand, by Theorem \ref{Torres-multi-ALexander}, we have 
$$
\Delta_L(1,y)\doteq 1+y +\cdots +y^{l-1}
$$
Since $l\ge 3$, comparing the two equations above gives $k_2-k_1=1$. Hence
$$
\Delta_L(x,y)\doteq y^a(1+\cdots +x^{k_1})+f(y)x^{k_1+1}+x^{k_1+2}+\cdots +x^{l-1}.
$$
Since $\Delta_L(x,y)\doteq \Delta_L(x^{-1},y^{-1})$, and recall that $f(y)\neq 1$, $f(y)\neq y^a$, we have $l=2m+1$ for $m\in \bZ$, and
$$
\Delta_L(x,y)\doteq y^a(1+\cdots +x^{m-1})+f(y)x^{m}+x^{m+1}+\cdots +x^{2m}.
$$
View $\Delta_L(x,y)$ as a Laurent polynomial of $x$ with coefficients in $\bZ[y,y^{-1}]$, 
the equation above shows that there is at most one power of $x$ whose 
coefficient is not a monomial of $y$. Switching the roles of $x$ and $y$ and repeating the same argument, we 
 conclude that there is at most one power of $y$ in $\Delta_L(x,y)$  whose 
coefficient is not a monomial of $x$. Therefore we must have $m=1$ and $l=3$.
\end{proof}

Combining \eqref{eqn_rank_HFK=HFL}, \eqref{eqn_Euler_characteristic_HFL}, Lemma \ref{Lemma_Delta_xt_lower_bound}, and Lemma \ref{Lemma_Delta_xt_lower_bound_sharp}, we obtain the following corollary, which may be of independent interest.

\begin{Corollary}\label{cor_Kh_bound_linking_number_3}
Suppose $L$ is an exchangeably braided link with linking number $l\ge3$, then $
\rank_{\bQ}\HFK(L;\bQ)\ge 12.
$
Moreover, if 
$\rank_{\bQ}\HFK(L;\bQ)=12,$
then $l=3$. \qed
\end{Corollary}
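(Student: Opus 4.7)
The plan is simply to chain together the four ingredients that are invoked in the statement: the identity \eqref{eqn_rank_HFK=HFL} between the ranks of $\HFK$ and $\HFL$, the Euler characteristic formula \eqref{eqn_Euler_characteristic_HFL} relating $\HFL(L;\bQ)$ to the multi-variable Alexander polynomial, and Lemmas \ref{Lemma_Delta_xt_lower_bound} and \ref{Lemma_Delta_xt_lower_bound_sharp} that estimate the $\ell^1$-norm of the coefficients of $(x-1)(y-1)\Delta_L(x,y)$.

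First I would note that an exchangeably braided link has two components, so the $n=2$ case of \eqref{eqn_Euler_characteristic_HFL} applies and gives
\begin{equation*}
\chi\bigl(\HFL(L;\bQ)\bigr) \doteq (x-1)(y-1)\,\Delta_L(x,y).
\end{equation*}
Here $\doteq$ absorbs a monomial factor, which does not affect the sum of absolute values of the coefficients. Next I would invoke the general principle that for any finite-dimensional multi-graded vector space $V = \bigoplus_{\alpha} V_\alpha$, one has
\begin{equation*}
\dim_\bQ V \;=\; \sum_\alpha \dim_\bQ V_\alpha \;\ge\; \sum_\alpha |\chi(V)_\alpha|,
\end{equation*}
because each coefficient of the graded Euler characteristic is an alternating sum of the $\dim V_\alpha$ in a single multi-degree. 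Applied to $\HFL(L;\bQ)$, this yields
\begin{equation*}
\rank_\bQ \HFL(L;\bQ) \;\ge\; \sum \bigl|\text{coefficients of } (x-1)(y-1)\Delta_L(x,y)\bigr|.
\end{equation*}

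Combining this inequality with \eqref{eqn_rank_HFK=HFL} and Lemma \ref{Lemma_Delta_xt_lower_bound} immediately gives $\rank_\bQ \HFK(L;\bQ) \ge 12$. For the second assertion, if equality holds then the $\ell^1$-norm of the coefficients of $(x-1)(y-1)\Delta_L(x,y)$ is at most $12$, hence exactly $12$ by Lemma \ref{Lemma_Delta_xt_lower_bound}; Lemma \ref{Lemma_Delta_xt_lower_bound_sharp} then forces $l=3$.

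There is no substantive obstacle: the work has been carried out in the two lemmas and in the cited Heegaard Floer inputs. The only point worth being careful about is the elementary inequality between the total dimension of a graded vector space and the $\ell^1$-norm of its graded Euler characteristic, together with the observation that multiplying by $\pm x^a y^b$ in the $\doteq$ of \eqref{eqn_Euler_characteristic_HFL} permutes monomials and hence preserves the sum of absolute values of coefficients.
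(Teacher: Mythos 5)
Your proposal is correct and follows exactly the route the paper intends: the corollary is stated with \qed precisely because it is the chain of \eqref{eqn_rank_HFK=HFL}, \eqref{eqn_Euler_characteristic_HFL}, and Lemmas \ref{Lemma_Delta_xt_lower_bound} and \ref{Lemma_Delta_xt_lower_bound_sharp} that you spell out, with the only "new" step being the standard inequality between the total rank of a multi-graded vector space and the $\ell^1$-norm of its graded Euler characteristic, which you state and justify correctly.
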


\begin{proof}[Proof of Theorem \ref{thm_L4a1}]
The ``if'' part of the theorem follows from a straightforward computation. 
Now suppose $L$ is a 2-component link such that 
\begin{equation}
\label{eqn_assumption_rank_8}
	\rank_{\mathbb{Z}/2}\Kh(L;\mathbb{Z}/2)=8,
\end{equation}
 we prove that $L$ is isotopic to L4a1 or its mirror image.

Denote the two components of $L$ by $K_1$ and $K_2$.
Batson-Seed's inequality \cite[Corollary 1.6]{Kh-unlink} gives
$$
\rank_{\mathbb{Z}/2}\Kh(L;\mathbb{Z}/2)\ge \rank_{\mathbb{Z}/2}\Kh(K_1;\mathbb{Z}/2)\cdot
\rank_{\mathbb{Z}/2}\Kh(K_2;\mathbb{Z}/2).
$$
Since $\rank_{\bZ/2}\Kh(K_i;\bZ/2)\ge 2$, we have 
$$
\rank_{\mathbb{Z}/2}\Kh(K_i;\mathbb{Z}/2)\le 4.
$$
By \cite[Corollary 3.2.C]{Shu:torsion_Kh},
$$
\rank_{\mathbb{Z}/2}\Khr(K_i;\mathbb{Z}/2)=\frac12
\rank_{\mathbb{Z}/2}\Kh(K_i;\mathbb{Z}/2)\le 2.
$$
The parity of $\rank_{\mathbb{Z}/2}\Khr(K_i;\mathbb{Z}/2)$ is invariant under crossing changes, therefore $\rank_{\mathbb{Z}/2}\Khr(K_i;\mathbb{Z}/2)$ is odd, hence it has to be 1.
 Kronheimer-Mrowka's unknot detection theorem \cite{KM:Kh-unknot}
then implies that both $K_1$ and $K_2$ are unknots. 

Pick a base point $p\in K_2$. We have
 $$
\rank_{\mathbb{Z}/2}\Khr(L,p;\mathbb{Z}/2)=\frac12 \rank_{\mathbb{Z}/2}\Kh(L;\mathbb{Z}/2)= 4.
$$
By Proposition \ref{prop_Kh>AHI},
\begin{equation}\label{eq_AHI_le_4}
\dim_{\bC} \AHI(K_1)\le \rank_{\mathbb{Z}/2}\Khr(L,p;\mathbb{Z}/2)=4,
\end{equation}
where $K_1$ is viewed as a knot in the solid torus $S^3-N(K_2)$.

If $\lk(K_1,K_2)=0$, then $K_1$ is \emph{homotopic} to the unknot in the solid torus. By Proposition 
\ref{prop_AHI>UkKl} and Proposition \ref{prop_AHI_parity},
 we have
$$
\dim_{\bC} \AHI(K_1;0)\ge \dim_{\bC} \AHI(\mathcal U_1;0)=2,
$$
and
$$
\dim_{\bC} \AHI(K_1;i)\equiv \dim_{\bC} \AHI( \mathcal{U}_1;i) \equiv 0 ~ (\text{mod}~2), \,\, \text{ for all } i.
$$
Therefore, by \eqref{eqn_f_grading_symmetric_at_zero} and \eqref{eq_AHI_le_4}, $\AHI(K_1)$ must be supported at  f-degree $0$. By Theorem \ref{Theorem-2g+n}, this implies $K_2$ bounds a disk that is disjoint from $K_1$, hence $L$ is the disjoint union of $K_1$ and $K_2$, which implies $L$ is the unlink. However, the unlink does not satisfy the assumption \eqref{eqn_assumption_rank_8}, which is a contradiction.
 
Therefore, we have $l=|\lk(K_1,K_2)|>0$, hence $K_1$ is \emph{homotopic} to the closure $\hat{\beta}$ of an $l$-braid $\beta$ in the solid torus $S^3-N(K_2)$. By Proposition \ref{prop_AHI_parity}, we have
$$
\dim_\bC \AHI(K_1;i)\equiv \dim_\bC \AHI( \hat{\beta};i) ~ (\text{mod}~2).
$$
and by (the easy direction of) Proposition \ref{AHI-braid-detection}, we have
  $$
\dim_\bC \AHI(\hat{\beta};\pm l)=1,
$$
$$\dim_\bC \AHI(\hat{\beta};\pm i)=0 \text{ for all } i>l.
$$
Therefore \eqref{eqn_f_grading_symmetric_at_zero}
and \eqref{eq_AHI_le_4} yield that
  $$
\dim \AHI(K_1;\pm l)=1
$$
and
$$
\dim \AHI(K_1; \pm i)= 0 \text{ for all } i>l.
$$
By Proposition \ref{AHI-braid-detection}, this implies $K_1$ is the closure of an $l$-braid in $S^3-N(K_2)$. 
A similar argument shows that $K_2$ is  
the closure of an $l$-braid in $S^3-N(K_1)$. Therefore $L$ is exchangeably braided.

By the universal coefficient theorem, 
$$
\rank_\bQ \Khr(L,p;\bQ)\le \rank_{\bZ/2} \Khr(L,p;{\mathbb{Z}/2})=\frac12 \rank_{\bZ/2} \Kh(L;{\mathbb{Z}/2})= 4.
$$
Therefore by Lemma \ref{lem_rank_of_Kh_and_Alexander} and Lemma \ref{Lemma_Delta_xt_lower_bound}, we have $l\le 2$.

 If $l=1$, then $L$ is the Hopf link, which does not satisfy \eqref{eqn_assumption_rank_8}.
 
If $l=2$, then $L=U\cup \hat\sigma_1^m$, where $\sigma_1\in B_2$ is a generator of the braid group with 2 strands and $m\in \bZ$. Since both components of $L$ are unknots, we have $m=\pm 1$, hence $L$ is isotopic to L4a1 or its mirror image.
\end{proof}

\section{Proof of Theorem \ref{thm_L6n1}}

The ``if'' part of the theorem follows from a straightforward calculation. 
Now suppose $L$ is a 3-component link with $\rank_{\mathbb{Z}/2}\Kh(L;\mathbb{Z}/2)=12$, we prove that $L$ isotopic to $L6n1$ or its mirror image. 

Denote the three components of $L$
by $K_1$, $K_2$, $K_3$.
 By Batson-Seed's inequality \cite[Corollary 1.6]{Kh-unlink}, we have
 $$
\rank_{\mathbb{Z}/2}\Kh(K_i\cup K_j;\mathbb{Z}/2)\cdot 
\rank_{\mathbb{Z}/2}\Kh(K_k;\mathbb{Z}/2)\le \rank_{\mathbb{Z}/2}\Kh(L;\mathbb{Z}/2)=12
 $$
 for all triples $(i,j,k)$ with $\{i,j,k\}=\{1,2,3\}$. Since $\rank_{\mathbb{Z}/2}\Kh(K_k;\mathbb{Z}/2)\ge 2$, we have
 $$
\rank_{\mathbb{Z}/2}\Kh(K_i\cup K_j;\mathbb{Z}/2)\le 6.
 $$

We apply the same parity argument as before.
By \cite[Corollary 3.2.C]{Shu:torsion_Kh}, we have
 $$
\rank_{\mathbb{Z}/2}\Khr(K_i\cup K_j;\mathbb{Z}/2)=\frac12 \rank_{\mathbb{Z}/2}\Kh(K_i\cup K_j;\mathbb{Z}/2)\le 3.
$$
Crossing changes do not change the parity of $\rank_{\mathbb{Z}/2}\Khr(K_i\cup K_j;\mathbb{Z}/2)$, hence $\rank_{\mathbb{Z}/2}\Khr(K_i\cup K_j;\mathbb{Z}/2)$ is even, therefore
 $$
\rank_{\mathbb{Z}/2}\Khr(K_i\cup K_j;\mathbb{Z}/2)\le 2,
$$
and we have
$$
 \rank_{\mathbb{Z}/2}\Kh(K_i\cup K_j;\mathbb{Z}/2)=2\,\rank_{\mathbb{Z}/2}\Khr(K_i\cup K_j;\mathbb{Z}/2)\le 4.
$$
By \cite[Theorem 1.2]{XZ:forest}, $K_i\cup K_j$ is either a Hopf link or an unlink. In particular, $|\lk(K_i,K_j)|=0$ or $1$. Hence after permuting the labels of the components, we may assume that $|\lk(K_3,K_1)|=|\lk(K_3,K_2)|$.

Pick a base point $p\in K_3$. By \cite[Corollary 3.2.C]{Shu:torsion_Kh}, we have
$$
\rank_{\mathbb{Z}/2}\Khr(L,p;\mathbb{Z}/2)=\frac12 \rank_{\mathbb{Z}/2}\Kh(L;\mathbb{Z}/2)=6.
$$
View $K_1\cup K_2$ as a link in the solid torus $S^3-N(K_3)$,  Proposition \ref{prop_Kh>AHI} gives
\begin{equation}\label{eq_AHI_le_6}
\dim_\bC \AHI(K_1\cup K_2)\le \rank_{\mathbb{Z}/2}\Khr(L,p;\mathbb{Z}/2)=6.
\end{equation}

We discuss two cases.
\\

{\bf Case 1.} $|\lk(K_3,K_1)|=|\lk(K_3,K_2)|=0$. Then $K_1\cup K_2$ is \emph{homotopic} to 
the unlink in the solid torus $S^3-N(K_3)$. Hence by Proposition \ref{prop_AHI>UkKl} and 
Proposition \ref{prop_AHI_parity},
we have 
$$
\dim_{\bC} \AHI(K_1\cup K_2,0)\ge \dim_{\bC} \AHI(\mathcal U_2,0)=4,
$$
and
$$
\dim_{\bC} \AHI(K_1\cup K_2,i)\equiv \dim_{\bC} \AHI(\mathcal{U}_2,i)\equiv 0 ~ (\text{mod}~2) \text{, for all } i.
$$
By \eqref{eqn_f_grading_symmetric_at_zero} and \eqref{eq_AHI_le_6}, 
$\AHI(K_1\cup K_2)$ must be supported at f-degree $0$. By Theorem \ref{Theorem-2g+n}, this implies $K_1\cup K_2$ is spit from $K_3$, so $L$ is either the unlink or the disjoint union of a Hopf link
and an unknot. In both cases we have $\rank_{\mathbb{Z}/2}\Kh(L;\mathbb{Z}/2)=8$, which contradicts the assumption.
\\

{\bf Case 2.}  $|\lk(K_3,K_1)|=|\lk(K_3,K_2)|=1$. Recall that the link $\mathcal{K}_2\subset S^1\times D^2$ is defined by $\mathcal{K}_2=S^1\times \{p_1,p_2\}$, and it can be viewed as a link in $S^3-N(K_3)$. The assumption above implies that  $K_1\cup K_2$ is \emph{homotopic} to $\mathcal{K}_2$.  By Proposition \ref{prop_AHI>UkKl} and 
Proposition \ref{prop_AHI_parity}, we have
\begin{equation}\label{eq_AHI_inequality_case_4}
\dim \AHI(K_1\cup K_2;i)\ge \dim \AHI(\mathcal{K}_2;i)
\end{equation}
and
\begin{equation}\label{eq_parity_case_4}
\dim \AHI(K_1\cup K_2;i)\equiv \dim \AHI( \mathcal{K}_2;i) ~ (\text{mod}~2)
\end{equation}
for all $i$.
Recall that 
\begin{equation}\label{eqn_AHI_of_K2}
	\AHI( \mathcal{K}_2)\cong \mathbb{C}_{(2)}\oplus \mathbb{C}_{(-2)}\oplus \mathbb{C}_{(0)}^2,
\end{equation}
where the subscripts represent the f-gradings.
It then follows from \eqref{eqn_f_grading_symmetric_at_zero}, \eqref{eq_AHI_le_6}, \eqref{eq_parity_case_4}, \eqref{eq_parity_case_4}, \eqref{eqn_AHI_of_K2}
 that 
$$
\AHI( K_1\cup K_2)\cong \mathbb{C}_{(2)}\oplus \mathbb{C}_{(-2)}\oplus \mathbb{C}_{(0)}^4.
$$
By Proposition \ref{AHI-braid-detection}, this implies $K_1\cup K_2$ is the closure of a 2-braid $\beta\in B_2$ in the solid torus $S^3-N(K_3)$. Since $K_1\cup K_2$ is either the unlink or the Hopf link,
$\beta$ is either trivial or $\sigma_1^{\pm 2}$, where $\sigma_1$ is a generator of $B_2$. If $\beta$ is trivial, then $L$ is the connected sum of two Hopf links, hence 
$\rank_{\mathbb{Z}/2}\Kh(L;\mathbb{Z}/2)=8$,
which contradicts the assumption. Therefore $\beta=\sigma_1^{\pm 2}$, hence $L$ is isotopic to L6n1 or its mirror image, and the result is proved.

\bibliographystyle{amsalpha}
\bibliography{references}

\end{document}